\newcommand{\mE}{\mathcal{E}}
\DeclareMathOperator{\Dbm}{\Delta_{\mu}^b}
\DeclareMathOperator{\mH}{\mathcal{H}}
\DeclareMathOperator{\supp}{supp}
\DeclareMathOperator{\sto}{sto}
\newtheoremstyle{normal}
{10pt}
{10pt}
{}
{}
{\bfseries}
{}
{0em}
{\bfseries{\thmname{#1}\thmnumber{ #2}\thmnote{ \hspace{0em}(#3)\newline}}}
\newtheoremstyle{standard}  
  {10pt}   
  {}   
  {\itshape}  
  {}       
  {\bfseries} 
  {:}         
  {0.2cm}  
  {}          
\newtheoremstyle{mittitel}  
  {10pt}   
  {}   
  {\itshape}  
  {}       
  {\bfseries} 
  {:}         
  {0.2cm}  
  {\bfseries{\thmname{#1}\thmnumber{ #2}\thmnote{ \hspace{0em}(#3)\newline}}}          
\title{Stochastic Heat Equations defined by Fractal Laplacians on Cantor-like Sets\vspace{-1ex} }
\author{Tim Ehnes\footnote{ Institute of Stochastics and Applications, University of Stuttgart, Pfaffenwaldring 57, 70569 Stuttgart, Germany, e-mail: tim.ehnes@mathematik.uni-stuttgart.de }}
\date{\vspace{-5ex}}
\begin{document}
\maketitle

\setlength{\parindent}{10pt}

\titlespacing{\section}{0pt}{20pt plus 4pt minus 2pt}{14pt plus 2pt minus 12pt}
\titlespacing{\subsection}{0pt}{20pt plus 4pt minus 2pt}{8pt plus 2pt minus 2pt}
\titlespacing{\subsubsection}{0pt}{12pt plus 4pt minus 2pt}{-6pt plus 2pt minus 2pt}

\theoremstyle{standard}

\newtheorem{thm}{Theorem}[section] 
\newtheorem{satz}[thm]{Proposition} 
\newtheorem{lem}[thm]{Lemma}
\newtheorem{kor}[thm]{Corollary} 
\newtheorem{defi}[thm]{Definition} 
\newtheorem{bem}[thm]{Remark}
\newtheorem{hyp}[thm]{Assumption}
\newtheorem{exa}[thm]{Example}

\begin{abstract}
We study stochastic heat equations in the sense of Walsh defined by fractal Laplacians on Cantor-like sets. For this purpose, we first investigate the corresponding heat kernels. Then, we prove existence and uniqueness of mild solutions to stochastic heat equations provided some Lipschitz and linear growth conditions. We establish Hölder continuity in space and time and compute the Hölder exponents. Moreover, we address the question of weak intermittency. 
\end{abstract}

\section{Introduction}\label{section_1}

In this paper we study parabolic stochastic partial differential equations defined by generalized second order differential operators.
To introduce the operator of interest, let $[a,b]\subset \mathbb{R}$ be a finite interval, $\mu$ a finite non-atomic Borel measure on $[a,b]$, $\mathcal{L}^2([a,b],\mu)$ the space of measurable functions $f$ such that $\int_a^b f^2d\mu<\infty$ and $L^2([a,b],\mu)$ the corresponding Hilbert space of equivalence classes with inner product $\langle f,g\rangle_{\mu}\coloneqq \int_a^b fgd\mu$. We define
\begin{align*}
\mathcal{D}_{\mu}^2\coloneqq \Big\lbrace f\in C^1[a,b]: &\exists \left(f^{\prime}\right)^{\mu}\in\mathcal{L}_2([a,b],\mu):  f^{\prime}(x)=f^{\prime}(a)+\int_a^x \left(f^{\prime}\right)^{\mu}(y)d\mu(y), ~~ x\in[a,b]\Big\rbrace.
\end{align*}
The Krein-Feller operator with respect to $\mu$ is given as 
\begin{align*}
\Delta_{\mu}: \mathcal{D}_{\mu}^2\subseteq L^2([a,b],\mu)\to L^2([a,b],\mu),~~ f\to\left(f^{\prime}\right)^{\mu}.
\end{align*}
This operator has been introduced, for example, in \cite{FAF,IKD,KS,KO,LG}, especially as the infinitesimal generator of a so-called Quasi diffusion. It is a measure-theoretic generalization of the classical second weak derivative $\Delta_{\lambda^1}$, where $\lambda^1$ is the one-dimensional Lebesgue measure. \par

In order to connect these operators with diffusion equations from a physical point of view, we consider the temperature distribution in a one-dimensional bar of length $1$ that runs from $x=0$ to $x=1$. The mass distribution of the bar shall have a density denoted by $\rho: [0,1]\to\mathbb{R}$. Further, we assume that the specific heat of the material, i.e. the amount of heat energy required to raise a mass unit by a temperature unit, is constant, as well as the thermal conductivity, which gives the ability to conduct heat. Hence, we can denote the specific heat by $c$ and the thermal conductivity by $\kappa$. Then, the temperature of the bar, the function $u(t,x)$, is determined by the heat equation 
\begin{align}
\kappa \frac{\partial^2 u}{\partial x^2}(t,x)=c\rho(x)\frac{\partial u}{\partial t}(t,x)\label{heat_equation_density}
\end{align}
with Dirichlet boundary conditions $u(t,0)=u(t,1)=0$ for all $t\geq 0$ if we assume that the temperature vanishes at the boundaries or Neumann boundary conditions 
$\frac{\partial u}{\partial x}(t,0)=\frac{\partial u}{\partial x}(t,1)=0$ if the boundaries are perfectly insulated.
In order to solve the heat equation, we use the separation of variables and write 
$u(t,x)=f(x)g(t)$, which yields
\begin{align*}
\kappa f^{\prime\prime}(x)g(t)=c\rho(x)f(x)g^{\prime}(t)
\end{align*}
and by resorting
\begin{align*}
\frac{f^{\prime\prime}(x)}{\rho(x)f(x)}=\frac{c}{\kappa}\frac{g^{\prime}(t)}{g(t)} 
\end{align*}
for all $t$ and $x$. Consequently, both sides of the equation are constant and we denote the value by $-\lambda$. We only consider the left-hand side, given by
\begin{align*}
f^{\prime\prime}(x)=-\lambda\rho(x)f(x).
\end{align*}
By integration with respect to the Lebesgue measure we get
\begin{align*}
f^{\prime}(x)-f^{\prime}(0)=-\lambda\int_0^x f(y)\rho(y)dy,
\end{align*}
which can be written as
\begin{align*}
f^{\prime}(x)-f^{\prime}(0)=-\lambda\int_0^x f(y)d\mu(y),
\end{align*}
where $\rho$ is the density of the measure $\mu$. By applying the definition of $\Delta_{\mu}$,
\begin{align*}
\Delta_{\mu} f = -\lambda f,
\end{align*}
which yields 
\begin{align*}
\kappa \Delta_{\mu}u=c\frac{\partial u}{\partial t},
\end{align*}
as a generalization of heat equation \eqref{heat_equation_density}, since this equation does not involve the density $\rho$. Consequently, we can use it to formulate the problem for measure which possess no density, in particular for singular measures.
 \par
 We are interested in the case where $\mu$ is a self-similar measure on a Cantor-like set. More precisely,
let $N\geq 2$ and $\{S_1,...,S_N\}$  be a finite family of affine contractions on $[0,1]$, i.e.
\begin{align*}
S_i: [0,1]\to[0,1], ~ S_i(x)=r_ix+b_i, ~0<r_i<1, ~ 0\leq b_i\leq 1-r_i, ~ i=1,...,N,
\end{align*} 
where  $S_1(0)=0<S_1(1)\leq S_2(0)<S_2(1)\leq ... <S_N(1)=1$.
Further, let  $\mu_1,...,\mu_N$ be weights, i.e. $\mu_1,...,\mu_N\in(0,1)$ and $\sum_{i=1}^N\mu_i=1$. It is known from \cite{HF} that a unique non-empty compact set $F\subseteq [0,1]$ exists such that 
\begin{align}
F=\bigcup_{i=1}^M S_i(F)\label{definition_set}
\end{align}
and a unique Borel probabiliy measure $\mu$ such that 
\begin{align}
\mu(A)=\sum_{i=1}^N\mu_i\mu\left(S_i^{-1}(A)\right)\label{self_similarity}
\end{align}
for any Borel set $A\subseteq [0,1]$. Further, it holds $\supp\mu=F$. The set $F$ is called Cantor-like set. \par 
The main topic of this paper is the consideration of the parabolic stochastic PDE
\begin{align}
\begin{split}
\frac{\partial}{\partial t}u(t,x)&=\Delta_{\mu}^b u(t,x)+f(t,u(t,x))+g(t,u(t,x))\xi(t,x),\\
u(0,x)&=u_0(x), \label{spde_heat_intro}
\end{split}
\end{align}
where $b\in\{N,D\}$ determines the boundary condition, $\xi$ is a space-time white noise on $L^2([0,1],\mu)$ and $f$ and $g$ are predictable processes satisfying some Lipschitz and linear growth conditions. We investigate existence and uniqueness of a mild solution to \eqref{spde_heat_intro} and, if a mild solution exists, the question whether a Hölder continuous version of this solution exists. It is known (see \cite{WI}) that the stochastic heat equation defined by the classical one-dimensional weak Laplacian $\Delta_{\lambda^1}$ has a unique mild solution which is, some regularity conditions provided, essentially $\frac{1}{2}$-Hölder continuous in space and $\frac{1}{4}$-Hölder continuous in time. Here, essentially $\alpha$-Hölder continuous means Hölder continuous for every exponent strictly less than $\alpha$. However, in two space dimensions it turns out that the mild solution is a distribution, no function (see \cite{WI}). Hambly and Yang \cite{HYC} addressed the questions regarding the above-mentioned properties in the setting of  a space with dimension between one and two, more precisely a p.c.f. self-similiar set (in the sense of \cite{KA}) with Hausdorff dimension between one and two. It turned out that, some conditions on the initial value $u_0$ and the processes $f$ and $g$ provided, there exists a version of the mild solution which is almost surely essentially $\frac{1}{2}$-Hölder continuous in space and $\frac{1}{2}(d_H+1)^{-1}$-Hölder continuous in time. Hence, the temporal Hölder exponent decreases with increasing space dimension. The Krein-Feller operator can be interpreted as a generalized Laplacian on sets with dimension less or equal one. Therefore, it seems natural to ask whether the mild solution to \eqref{spde_heat_intro} defined by $\Delta_{\mu}^b$ has, if it exists, a temporal Hölder exponent that is greater than $\frac{1}{4}$ in case of dimension less than one. Moreover, for the investigation on p.c.f. fractals in \cite{HYC}, it is assumed that the weights are given as $\mu_i=r_i^{d_{H}}$. Another aim of this paper is to find Hölder exponents for other weights. \\

Preliminary for the consideration of the mild solution to \eqref{spde_heat_intro}, we need to have a closer look on the heat kernel of $\Delta_{\mu}^b$, defined by
\begin{align*}
p_t^b(x,y)=\sum_{k=1}^{\infty} e^{-\lambda_k^b t}\varphi_k^b(x)\varphi_k^b(y),
\end{align*}
where $\lambda_k^b, k\geq 1$ are the eigenvalues and $\varphi_k^b, ~ k\geq 1$ the $L^2([0,1],\mu)$-normed eigenfunctions of the Neumann- (or Dirichlet- resp.) Krein-Feller operator $\Delta_{\mu}^b$. From \cite{ES} it is known that a constant $C_2>0$ exists such that for all $k\in\mathbb{N}$
\begin{align*}
\left\lVert \varphi_k^b\right\rVert_{\infty}\leq C_2k^{\frac{\delta}{2}},
\end{align*} 
where $\gamma$ is the spectral exponent of $\Delta_{\mu}^b$ and $\delta\coloneqq \max_{1\leq i\leq N}\frac{\log \mu_i}{\log\left((\mu_ir_i)^{\gamma}\right)}$. This estimate along with the spectral asymptotics (see \cite{FAF}) leads to an extension of the well-known heat kernel estimates (see for example \cite{KS}), which will be our main tool in the observation of the mild solution to \eqref{spde_heat_intro}. \par 
It turns out that, under some conditions on $u_0, f$ and $g$, the mild solution to \eqref{spde_heat_intro} exists, is unique and jointly continuous in space and time. Moreover, we show that, under additional conditions on $u_0$, there exists a version that is essentially $\frac{1}{2}$-Hölder continuous in space and essentially $\frac{1}{2}-\frac{\gamma\delta}{2}$-Hölder continuous in time. If $\mu$ is chosen as the $d_H$-dimensional Hausdorff measure, we obtain $\frac{1}{2}(d_H+1)^{-1}$
as essential temporal Hölder exponent, which is indeed greater than $\frac{1}{4}$ if $d_H<1$. If $\mu$ is another measure, we have $\gamma\delta>\frac{d_H}{d_H+1}$, which gives a lower temporal Hölder exponent in this case. \par
In \cite[Section 6]{HYC} the relation between 
stochastic heat equations in the sense of Walsh and stochastic PDEs on $L^2([0,1],\mu)$ in the sense of da Prato–Zabczyk (see \cite{DZS}) has been used to establish $\frac{1}{2}$ as essential spacial Hölder exponent. We do not follow this procedure. Instead, we approximate the heat kernel by proving that for $x\in F$, $t\in[0,T]$
\[\int_0^t\int_0^1\left(\left\langle p_{t-s}^b(\cdot,y),f_n^x\right\rangle_{\mu} - p_{t-s}^b(x,y)\right)^2d\mu(y)ds\to 0\] as $n\to\infty,$ where the sequence $\left(f_n^x\right)_{n\in\mathbb{N}}$ approximates the Delta functional of $x$. Then, we show that the resulting approximating mild solutions have the desired spatial continuity and that the regularity is preserved upon taking the limit. In particular, we do not need to make use of the theory of da Prato–Zabczyk. Nevertheless, stochastic heat equation in the sense of da Prato–Zabczyk defined by $\Dbm$  are, in addition to their realtion to Walsh SPEs, interesting in itself. However, we do not investigate such SPDEs in this paper. It should be noted that the investigation work very similar to the corresponding one in \cite{HYC}. \\
 
 Next to these continuity properties, we investigate the intermittency of the mild solution to \eqref{spde_heat_intro}. Roughly speaking, an intermittent process develops increasingly high peaks on small space-intervals when the time parameter  increases. This is a phenomenon of mild solution to stochastic diffusion equations which has found much attention in the last years (see, among many others, \cite{BCS}, \cite{HHS}, \cite{KAS} \cite{KKI}).  We call a mild solution $u$ weakly intermittent on $[0,1]$ if the lower and the upper moment Lyapunov exponents, which are respectively the functions $\gamma$ and $\bar \gamma$ defined by
\begin{align*}
\gamma(p,x)\coloneqq \liminf_{t\to\infty}\frac{1}{t}\log\mathbb{E}\left[u(t,x)^p\right], ~~ 
\bar\gamma(p,x)\coloneqq \limsup_{t\to\infty}\frac{1}{t}\log\mathbb{E}\left[u(t,x)^p\right], ~~ p\in(0,\infty), x\in[0,1],
\end{align*}
satisfy
\begin{align*}
\gamma(2,x)>0,~~~ \bar\gamma(p,x)<\infty, ~~ ~p\in [2,\infty), x\in[0,1],
\end{align*}
(see \cite[Definition 7.5]{KAS}).
We prove this in the Neumann case for $f=0$ and some conditions on $g$. \\

This paper is structured as follows. In Section \ref{section_2} we give definitions related to Krein-Feller operators and Cantor-like sets as well as results concerning the spectral asymptotics. Furthermore, we recall a method to approximate the resolvent density, we collect basic properties and  prove some continuity properties of heat kernels.
Section \ref{section_4}  is dedicated to the analysis of SPDE \eqref{spde_heat_intro}, including the proofs of existence, uniqueness and Hölder continuity properties of the mild solution as well as the investigation of weak intermittency.
\par

\section{Preliminaries and Preparing Estimates}\label{section_2}
\subsection{Definition of Krein-Feller Operators on Cantor-like Sets}\label{Krein-Feller Operators and Cantor-like Sets}
First, we recall the definition and some analytical properties of the operator $\Delta_{\mu}^b$, where $b\in\{N,D\}$ and $\mu$ is a self-similar measure on a Cantor-like set according to the definition in Section \ref{section_1}. \par 
We denote the support of the measure $\mu$ and thus the Cantor-like set by $F$. If $[0,1]\setminus F\neq\emptyset$, $[0,1]\setminus F$ is open in $\mathbb{R}$ and can be written as
\begin{align}
[0,1]\setminus F = \bigcup_{i=1}^{\infty} (a_i,b_i)\label{offene_mengen} 
\end{align}
with $0<a_i<b_i<1$, $a_i,b_i\in[0,1]$ for $i\geq 1$. We define 
\begin{align*}
\mathcal{D}_{\lambda^1}^1\coloneqq\left\lbrace f:[0,1]\to\mathbb{R}: \text{there exists } f^{\prime}\in \mathcal{L}^2([0,1],\lambda^1): f(x)=f(0)+\int_0^x f^{\prime}(y)d\lambda^1(y),~ x\in[0,1]  \right\rbrace
\end{align*}
and $H^1\left([0,1],\lambda^1\right)$ as the space of all $\mathcal{H}\coloneqq L^2([0,1],\mu)$-equivalence classes having a $\mathcal{D}_{\lambda^1}^1-$represen\-tative. If $\mu=\lambda^1$ on $[0,1]$, this definition is equivalent to the definition of the Sobolev space $W_2^1$.

$H^1\left([0,1],\lambda^1\right)$ is the domain of the non-negative symmetric bilinear form $\mE$ on $\mathcal{H}$ defined by
\begin{align*}
\mathcal{E}(u,v)=\int_0^1 u'(x)v'(x)dx,~~~ u,v\in \mathcal{F}\coloneqq H^1\left([0,1],\lambda^1\right).
\end{align*}
Hereby, for each argument, which is an element of $\mathcal{H}$, we choose the $\mathcal{D}_{\lambda^1}^1$-representative which is linear on $[0,1]\setminus F$. 
It is known (see \cite[Theorem 4.1]{FD}) that $\left(\mE,\mathcal{F}\right)$ defines a Dirichlet form on $\mathcal{H}$. Hence, there exists an associated non-negative, self-adjoint operator $\Delta_{\mu}^N$ on $\mathcal{H}$ with $\mathcal{F}=\mathcal{D}\left(\left(-\Delta_{\mu}^N\right)^{\frac{1}{2}}\right)$ such that
\begin{align*}
\langle -\Delta_{\mu}^N u,v\rangle_{\mu}&=\mE(u,v), ~~u\in\mathcal{D}\left(\Delta_{\mu}^N\right),v\in \mathcal{F}
\end{align*}
and it holds 
\[\mathcal{D}\left(\Delta_{\mu}^N\right)=\left\{f\in \mathcal{H}: f \text{ has a representative } \bar f \text{ with } \bar f\in \mathcal{D}_{\mu}^2 \text{ and } \bar f'(0)=\bar f'(1)=0\right\}.\] $\Delta_{\mu}^N$ is called Neumann Krein-Feller operator w.r.t. $\mu$. Furthermore, let $\mathcal{F}_0\coloneqq H^1_0\left([0,1],\lambda^1\right)$ be the space of all $\mathcal{H}$-equivalence classes which have a $\mathcal{D}_{\lambda^1}^1-$representative $f$ such that $f(0)=f(1)=0.$
The bilinear form defined by
\begin{align*}
\mathcal{E}(u,v)=\int_0^1 u'(x)v'(x)dx,~~~ u,v\in \mathcal{F}_0,
\end{align*}
is a Dirichlet form, too (see \cite[Theorem 4.1]{FD}).
Again, there exists an associated non-negative, self-adjoint operator $\Delta_{\mu}^D$ on $\mathcal{H}$ with $\mathcal{F}_0=\mathcal{D}\left(\left(-\Delta_{\mu}^D\right)^{\frac{1}{2}}\right)$ such that
\begin{align*}
\langle -\Delta_{\mu}^D u,v\rangle_{\mu}&=\mE(u,v), ~~ u\in \left(\Delta_{\mu}^D\right),~v\in \mathcal{F}_0
\end{align*}
and it holds 
\[\mathcal{D}\left(\Delta_{\mu}^D\right)=\left\{f\in \mathcal{H}: f \text{ has a representative } \bar f \text{ with } \bar f\in \mathcal{D}_{\mu}^2 \text{ and } \bar f(0)=\bar f(1)=0\right\}.\] $\Delta_{\mu}^D$ is called Dirichlet Krein-Feller operator w.r.t. $\mu$.\par

A concept to describe Cantor-like sets is given by the so-called word or code space. Let $I\coloneqq \{ 1,...,N\}$, $\mathbb{W}_n= I^n$ be the set of all sequences $\omega$ of length $|\omega|=n$, $\mathbb{W}^*\coloneqq \cup_{n\in\mathbb{N}} I^n,$ 
the set of all finite sequences and $\mathbb{W}\coloneqq I^{\infty}$ the set of all infinite sequences $\theta=\theta_1\theta_2\theta_3...$ with $\theta_i\in I$ for $i\in\mathbb{N}$. Then, $I$ is called alphabet and $\mathbb{W},~ \mathbb{W}^*,~ \mathbb{W}^n: ~ n\in\mathbb{N}$ are called word spaces. We define an ordering on $\mathbb{W}$ by denoting two words $\omega$ and $\sigma$ as equal if $\omega_i=\sigma_i$ for all $i\in\mathbb{N}$ and otherwise, we write
$\omega<\sigma :\Leftrightarrow \sigma_k<\omega_k$ or
$\omega>\sigma :\Leftrightarrow \sigma_k>\omega_k$, where $k\coloneqq\inf\{ n\in\mathbb{N}:\sigma_n\neq\omega_n\}$. In addition to an ordering we define a metric on the word space by the map $d:\mathbb{W}\times \mathbb{W}\rightarrow \mathbb{R},~ d(\omega,\sigma)=N^{-k}$ with $k$ defined as before. It is known (see e.g. \cite[Theorem 2.1]{BF})  
that for every $x\in [0,1]$ the map
\begin{align*}
\pi_x: \mathbb{W}\rightarrow F, ~~\sigma\mapsto\lim_{n \to\infty} S_{\sigma_1}\circ S_{\sigma_2}\circ...\circ S_{\sigma_n}(x) 
\end{align*}
is well-defined, continuous, surjective and independent of $x\in [0,1]$, which means $\pi_x(\sigma)=\pi_y(\sigma)$ for all $x,y\in [0,1],~ \sigma\in \mathbb{W}$. Therefore, for every $x\in[0,1]$ and every $y\in F$ there exists, at least, one element of $\mathbb{W}$ which is by $\pi_x$ associated to $y$.

\subsection{Spectral Theory of Krein-Feller Operators}
Let $b\in\{N,D\}$ and let $\mu$ be a self-similar measure on a Cantor-like set according to the given conditions.
Further, let $\gamma$ be the spectral exponent of $-\Delta_{\mu}^b$, that is the unique solution of
\begin{align}
\sum_{i=1}^N(\mu_ir_i)^{\gamma}=1. \label{spectral_exponent}
\end{align}
It is known from \cite[Proposition 6.3, Lemma 6.7, Corollary 6.9]{FA} that there exists an orthonormal basis $\lbrace \varphi_k^b: k\in\mathbb{N}\rbrace$ of $\mathcal{H}$ consisting of $\mathcal{H}$-normed eigenfunctions of $-\Delta_{\mu}^b$ and that for the related ascending ordered eigenvalues $\{\lambda^{b}_i:i\in\mathbb{N}\}$ it holds $0\leq\lambda_1^b\leq\lambda_2^b\leq...,$ where $\lambda_1^D>0$. 
Furthermore, by \cite{FAF} there exist constants $C_0,C_1>0$ such that for $k\geq 2$ 
\begin{align}
C_0k^{\frac{1}{\gamma}}\leq \lambda_k^b\leq C_1k^{\frac{1}{\gamma}}. \label{eigenwertabsch}
\end{align}

Further, we have the following upper bound for the uniform norm of the eigenfunctions.
\begin{satz}\label{eigenfunction_estimate_theorem}
Let $\delta\coloneqq \max_{1\leq i\leq N}\frac{\log \mu_i}{\log\left((\mu_ir_i)^{\gamma}\right)}$. Then, there exists a constant $\bar C_2>0$ such that for all $k\in\mathbb{N}$ 
\begin{align*}
\left\lVert\varphi_k^b\right\rVert_{\infty}\leq \bar{C_2} \left(\lambda_k^b\right)^{\frac{\gamma}{2}\delta}.
\end{align*}
\end{satz}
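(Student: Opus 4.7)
The proposition is a direct translation of the index-based bound $\|\varphi_k^b\|_\infty \leq C_2 k^{\delta/2}$ from \cite{ES}, recalled in the introduction, into an eigenvalue-based bound. The plan is to feed the Weyl-type spectral asymptotics \eqref{eigenwertabsch} into that estimate, with a minor side-check at the base index.

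Concretely, for $k \geq 2$ the lower bound $\lambda_k^b \geq C_0 k^{1/\gamma}$ from \eqref{eigenwertabsch} rearranges, since $\gamma > 0$, to $k \leq C_0^{-\gamma}(\lambda_k^b)^{\gamma}$. Raising to the power $\delta/2 > 0$ and substituting into the $L^\infty$-estimate of \cite{ES} yields
\[
\|\varphi_k^b\|_\infty \leq C_2\, k^{\delta/2} \leq C_2\, C_0^{-\gamma\delta/2}(\lambda_k^b)^{\gamma\delta/2},
\]
so one sets $\bar C_2 := C_2\, C_0^{-\gamma\delta/2}$, enlarged if necessary to cover $k=1$. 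The parameters $\gamma$ and $\delta$ here are exactly those appearing in the cited $L^\infty$-estimate and in \eqref{spectral_exponent}, so no compatibility adjustment is needed.

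The only point deserving separate attention is $k = 1$. In the Dirichlet case $\lambda_1^D > 0$ is a fixed positive quantity, so the bound for $k=1$ follows from the crude estimate $\|\varphi_1^D\|_\infty \leq C_2$ by enlarging $\bar C_2$. In the Neumann case $\varphi_1^N$ is the constant eigenfunction at eigenvalue $0$ and $\mathcal{H}$-normalized (hence $\equiv 1$ on the unit interval for the probability measure $\mu$), so $\|\varphi_1^N\|_\infty$ is a fixed finite number; either one restricts the statement to $k \geq 2$ (as the spectral asymptotics \eqref{eigenwertabsch} themselves are stated) or one reads $\lambda_k^b$ as $\lambda_k^b \vee 1$. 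Either convention is harmless for the later heat-kernel applications. Overall there is no real technical obstacle here: the substantive work is contained in \cite{ES} and \cite{FAF}, and the proposition serves to package their combination in the eigenvalue form needed for the heat-kernel and mild-solution estimates in Section~\ref{section_4}.
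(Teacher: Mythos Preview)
Your argument is correct, but it runs in the opposite direction to the paper's logic. The paper's proof of this proposition is a bare citation of \cite[Theorem 2.1]{ES}, which already states the \emph{eigenvalue}-based bound $\lVert\varphi_k^b\rVert_\infty\le \bar C_2(\lambda_k^b)^{\gamma\delta/2}$; immediately afterwards the paper combines this with the \emph{upper} spectral bound $\lambda_k^b\le C_1 k^{1/\gamma}$ from \eqref{eigenwertabsch} to obtain the index-based estimate $\lVert\varphi_k^b\rVert_\infty\le C_2 k^{\delta/2}$ (equation \eqref{eigenfunction_estimate}). You instead take the index-based form as the primitive input from \cite{ES} and recover the eigenvalue form via the \emph{lower} spectral bound. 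Mathematically this is fine and equally elementary; the only cost is a slight circularity relative to the paper's presentation, since the index form you quote from the introduction is itself a consequence of the proposition you are proving. Your treatment of $k=1$ (in particular the Neumann case $\lambda_1^N=0$) is an honest observation; the stated inequality is indeed vacuous or requires the convention $\lambda_k^b\vee 1$ there, and this does not affect any downstream use.
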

\begin{proof}
\cite[Theorem 2.1]{ES}
\end{proof}
Hereby, $\left\lVert f\right\Vert_{\infty}\coloneqq \text{ess sup}_{x\in[0,1]}|f(x)|$. This is an estimate for the essential supremum, but it also holds for the supremum of the representative in $\mathcal{D}^2_{\mu}$, since this representative is continuous on $[0,1]$ and linear on $(a_i,b_i),$ $i\in\mathbb{N}$,  Inequality \eqref{eigenwertabsch} implies with $C_2\coloneqq C_1^{\frac{\delta}{2}}\bar{C_2}$
\begin{align}
\left\lVert\varphi_k^b\right\rVert_{\infty}\leq C_2 k^{\frac{\delta}{2}}.\label{eigenfunction_estimate}
\end{align}
Note that $C_2$ does not depend on $k$, but may depend on the underlying measure $\mu$.

\subsection{Properties of the Resolvent Operator}\label{resolvent_chapter}

For $\lambda>0$ and $b\in\{N,D\}$ let $\rho_{\lambda}^b$ be the resolvent density of $\Delta_{\mu}^b$. That is, with $R^{\lambda}_{b}\coloneqq (\lambda-\Delta_{\mu}^b)^{-1}$ it holds
\begin{align*}
R^{\lambda}_{b}f(x)=\int_0^1\rho_{\lambda}^b(x,y)f(y)d\mu(y), ~~~ f\in \mH.
\end{align*}
Such a mapping exists and is given by (compare \cite[Theorem 6.1]{FA})
\begin{align*}
\rho_{\lambda}^N(x,y) = \rho_{\lambda}^N(y,x) = \left(B^{\lambda}_N\right)^{-1} g_{1,N}^{\lambda}(x)g_{2,N}^{\lambda}(y), ~~ x,y\in[0,1], x\leq y, \\
\rho_{\lambda}^D(x,y) = \rho_{\lambda}^D(y,x) = \left(B^{\lambda}_D\right)^{-1}g_{1,D}^{\lambda}(x)g_{2,D}^{\lambda}(y), ~~ x,y\in[0,1], x\leq y, 
\end{align*}
where $B^{\lambda}_N,B^{\lambda}_D$ are non-vanishing constants and  the mappings $g_{1,N}^{\lambda}, g_{2,N}^{\lambda}, g_{1,D}^{\lambda}, g_{2,D}^{\lambda}$ are eigenfunctions of $\Delta_{\mu}$ with appropriate boundary conditions (see \cite[Remark 5.2]{FA}). Moreover, it is Lipschitz continuous.

\begin{satz}\label{resolvent_density_lipschitz}
Let $\lambda>0$. Then, for every $\lambda>0$ there exists a constant $L_{\lambda}\geq 0$ such that
\begin{align*}
\left| \rho_{\lambda}^b(x,y)-\rho_{\lambda}^b(x,z)\right|\leq L_{\lambda}|y-z|, ~~~ x,y,z\in[0,1].
\end{align*}
\end{satz}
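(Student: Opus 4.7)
The plan is to exploit the explicit product structure of $\rho_\lambda^b(x,y)$ together with the fact that the building blocks $g_{1,b}^\lambda$ and $g_{2,b}^\lambda$ belong to $\mathcal{D}_\mu^2\subseteq C^1[0,1]$, so on the compact interval $[0,1]$ they are bounded and Lipschitz continuous. Fix $\lambda>0$, set $c\coloneqq |B_b^\lambda|^{-1}$, let
\[
M\coloneqq \max\bigl(\|g_{1,b}^\lambda\|_\infty,\|g_{2,b}^\lambda\|_\infty\bigr),
\qquad
L\coloneqq \max\bigl(\mathrm{Lip}(g_{1,b}^\lambda),\mathrm{Lip}(g_{2,b}^\lambda)\bigr),
\]
both of which are finite by the $C^1[0,1]$ property.

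Now fix $x\in[0,1]$ and, without loss of generality, let $y\leq z$. I would split into three cases according to the relative positions of $x$ with respect to $y,z$. If $x\leq y\leq z$, then both $\rho_\lambda^b(x,y)$ and $\rho_\lambda^b(x,z)$ equal $c\,g_{1,b}^\lambda(x)g_{2,b}^\lambda(\cdot)$, so
\[
|\rho_\lambda^b(x,y)-\rho_\lambda^b(x,z)|\leq c\,M\,L\,|y-z|.
\]
The symmetric case $y\leq z\leq x$ is identical, using $g_{2,b}^\lambda(x)$ as the common factor and Lipschitzness of $g_{1,b}^\lambda$.

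The slightly less immediate case is $y\leq x\leq z$, where the two expressions involve different factors. Here the key observation is that the two pieces of the piecewise formula agree at the diagonal: both definitions give $\rho_\lambda^b(x,x)=c\,g_{1,b}^\lambda(x)g_{2,b}^\lambda(x)$, so $\rho_\lambda^b(x,\cdot)$ is continuous at $x$. Splitting via the triangle inequality,
\[
|\rho_\lambda^b(x,y)-\rho_\lambda^b(x,z)|\leq |\rho_\lambda^b(x,y)-\rho_\lambda^b(x,x)|+|\rho_\lambda^b(x,x)-\rho_\lambda^b(x,z)|,
\]
and each summand is now of the type handled above, giving bounds $c\,M\,L(x-y)$ and $c\,M\,L(z-x)$ respectively. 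These add to $c\,M\,L(z-y)=c\,M\,L\,|y-z|$.

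Combining the three cases yields the claim with $L_\lambda\coloneqq c\,M\,L$, which depends only on $\lambda$ and not on $x,y,z$. The only potential obstacle is verifying that the eigenfunctions $g_{i,b}^\lambda$ really lie in $C^1[0,1]$ with bounded derivatives, but this is built into the definition of $\mathcal{D}_\mu^2$ given at the start of Section~\ref{section_1}, so no further work is required.
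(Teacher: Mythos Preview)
Your proof is correct and follows the natural route suggested by the explicit product form of $\rho_\lambda^b$ stated just before the proposition: since $g_{1,b}^\lambda,g_{2,b}^\lambda\in\mathcal{D}_\mu^2\subseteq C^1[0,1]$ they are bounded and Lipschitz on the compact interval, and your three-case analysis (with the diagonal split in the crossing case) handles the piecewise definition cleanly. The paper itself does not give a proof here but simply cites \cite[Proposition~2.6]{ES}; your argument is exactly the standard verification one would expect in that reference, so the approaches are essentially the same.
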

\begin{proof}
\cite[Proposition 2.6]{ES}
\end{proof}

The following result connects the introduced resolvent and the semigroup associated to $\Delta_{\mu}^b$.
\begin{lem} \label{resolvent_semigroup_lemma}
Let $\lambda>0, f\in\mH$. Then,
\begin{align*}
R^{\lambda}_b f=\int_0^{\infty}e^{-\lambda t}T_t^bfdt.
\end{align*}
\end{lem}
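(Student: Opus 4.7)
The plan is to exploit the spectral decomposition of $-\Delta_{\mu}^{b}$ that has already been recorded above: there is an orthonormal basis $\{\varphi_k^b\}_{k\in\mathbb{N}}$ of $\mathcal{H}$ consisting of eigenfunctions with non‑negative eigenvalues $\{\lambda_k^b\}_{k\in\mathbb{N}}$. In this basis the semigroup and the resolvent have explicit diagonal forms, and the identity reduces to the elementary Laplace transform $\int_0^{\infty}e^{-(\lambda+\lambda_k^b)t}\,dt=(\lambda+\lambda_k^b)^{-1}$.

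Concretely, I would first expand $f=\sum_{k\geq 1}\langle f,\varphi_k^b\rangle_{\mu}\varphi_k^b$ in $\mathcal{H}$ and write
\[
T_t^b f=\sum_{k\geq 1}e^{-\lambda_k^b t}\langle f,\varphi_k^b\rangle_{\mu}\varphi_k^b,\qquad R^{\lambda}_b f=\sum_{k\geq 1}\frac{1}{\lambda+\lambda_k^b}\langle f,\varphi_k^b\rangle_{\mu}\varphi_k^b,
\]
both convergent in $\mathcal{H}$, using that $T_t^b$ is the strongly continuous contraction semigroup generated by $\Delta_{\mu}^b$ and that $\lambda_1^b\geq 0$. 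Next, I would show that $t\mapsto e^{-\lambda t}T_t^b f$ is Bochner integrable on $(0,\infty)$ with values in $\mathcal{H}$: since $\lVert T_t^b f\rVert_{\mathcal{H}}\leq \lVert f\rVert_{\mathcal{H}}$ and $\lambda>0$, the norm is dominated by $e^{-\lambda t}\lVert f\rVert_{\mathcal{H}}\in L^1((0,\infty))$.

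To finish, I would test the equality against an arbitrary $g\in\mathcal{H}$, reducing the statement to a scalar equation. By Fubini and Parseval,
\[
\Big\langle\int_0^{\infty}e^{-\lambda t}T_t^b f\,dt,\,g\Big\rangle_{\mu}=\int_0^{\infty}e^{-\lambda t}\sum_{k\geq 1}e^{-\lambda_k^b t}\langle f,\varphi_k^b\rangle_{\mu}\langle g,\varphi_k^b\rangle_{\mu}\,dt,
\]
and the summand is non‑negative after pulling out signs (or one uses absolute convergence via $\sum_k|\langle f,\varphi_k^b\rangle_{\mu}\langle g,\varphi_k^b\rangle_{\mu}|\leq\lVert f\rVert_{\mathcal{H}}\lVert g\rVert_{\mathcal{H}}$ and the $L^1$ bound on $e^{-(\lambda+\lambda_k^b)t}$) so Fubini applies. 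Interchanging and performing the $t$‑integral yields $\sum_{k\geq 1}(\lambda+\lambda_k^b)^{-1}\langle f,\varphi_k^b\rangle_{\mu}\langle g,\varphi_k^b\rangle_{\mu}=\langle R^{\lambda}_b f,g\rangle_{\mu}$. Since $g\in\mathcal{H}$ was arbitrary, the claimed identity follows.

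The only genuinely delicate point is the Fubini/interchange justification, i.e.\ verifying that the Bochner integral on the left agrees with the $\mathcal{H}$‑valued sum on the right; but this is handled as above by the trivial domination $\lVert e^{-\lambda t}T_t^b f\rVert_{\mathcal{H}}\leq e^{-\lambda t}\lVert f\rVert_{\mathcal{H}}$. Everything else is bookkeeping with the spectral expansion. (Alternatively, one could quote this as a standard Hille–Yosida fact for strongly continuous contraction semigroups on Hilbert space; the spectral proof sketched here is the self‑contained variant that best fits the setup of the paper.)
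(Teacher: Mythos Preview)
Your argument is correct. The spectral expansion reduces the identity to the scalar Laplace transform $\int_0^{\infty}e^{-(\lambda+\lambda_k^b)t}\,dt=(\lambda+\lambda_k^b)^{-1}$, and your justification of the Bochner integrability and the Fubini interchange is adequate given the contraction property $\lVert T_t^b f\rVert_{\mu}\leq\lVert f\rVert_{\mu}$ and the absolute summability coming from Cauchy--Schwarz on the Fourier coefficients.

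The paper, however, does not argue the identity at all: it simply cites \cite[Theorem 1.10]{ENO}, i.e.\ the standard fact from semigroup theory that for a strongly continuous semigroup with generator $A$ one has $(\lambda-A)^{-1}=\int_0^{\infty}e^{-\lambda t}T_t\,dt$ whenever $\lambda$ lies in the resolvent set (here guaranteed by $\lambda>0$ and non-positivity of $\Delta_{\mu}^b$). You yourself noted this alternative at the end of your proposal. So your route is genuinely different: it is self-contained and exploits the specific spectral structure available here, at the cost of writing out the interchange argument; the paper's route is a one-line appeal to the general Hille--Yosida machinery, which is shorter but outsources the work. Both are perfectly appropriate in this context.
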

\begin{proof}
\cite[Theorem 1.10]{ENO}
\end{proof}

\subsection{Approximation of the Resolvent Density}
We recall a method to approximate the delta functional on Cantor-like sets, in particular to approximate the just introduced resolvent density, which will then again  be used to approximate point evaluations of heat kernels.\par 
For $n\geq 1$ let $\Lambda_n$ be the partition of the word space $\mathbb{W}$ be defined by
\begin{align*}
\Lambda_n=\{\omega=\omega_1...\omega_m\in \mathbb{W}^*:
 r_{\omega_1}\cdots r_{\omega_{m-1}}>r_{\max}^{n}\geq r_{\omega}\},
\end{align*}
where $r_{\max}\coloneqq\max_{i=1,...,N}r_i$.
Moreover, let $\nu_i=\frac{\mu_i}{r_i^{d_H}}, 1\leq i\leq N$, where $d_H$ is the Hausdorff dimension of $F$. Further, for $\omega\in\mathbb{W}$ we denote $S_{\omega}(F)$ by $F_{\omega}$.
\begin{lem}\label{partitionslemma}
It holds for $n\in\mathbb{N}$:
\begin{enumerate}[label=(\roman*)]
\item \label{partitionslemma_1} $|\Lambda_n|<\infty$ and $\bigcup_{\omega\in \Lambda_n}F_{\omega}=F.$
\item \label{partitionslemma_2} For $\omega\in \Lambda_n$ there exists a subset $\Lambda'\subseteq\Lambda_{n+1}$ such that $F_{\omega}=\bigcup_{\nu\in\Lambda'}F_{\nu}.$
\item \label{partitionslemma_3} For $\omega,\nu\in\Lambda_n$, $\omega\neq\nu$ it holds $|F_{\omega}\cap F_{\nu}|\leq 1.$
\item \label{partitionslemma_estimate} For $\omega\in\Lambda_n$ it holds $\mu(F_{\omega})>r_{\max}^{nd_H}r_{\min}^{d_H}\nu_{\min}^n$.
\item \label{partitionslemma_5} For $w\in\mathbb{W}^*$ there exists $n\in\mathbb{N}$ such that $w\in\Lambda_n$. Consequently, for all $m\geq n$ there exists $\Lambda'_m\subseteq \Lambda_m$ such that $F_w=\cup_{\nu\in\Lambda'_m}F_{\nu}$.
\end{enumerate}
\end{lem}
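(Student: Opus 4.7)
The plan is to work through the five assertions in order, using two bookkeeping facts implied by the definition: if $\omega = \omega_1 \ldots \omega_m \in \Lambda_n$, writing $r_\omega = r_{\omega_1}\cdots r_{\omega_m}$, then $r_{\max}^n r_{\min} < r_\omega \leq r_{\max}^n$ (the left inequality because $r_{\omega_m} \geq r_{\min}$), and $|\omega| \leq n$ (because $r_{\max}^{m-1} \geq r_{\omega_1}\cdots r_{\omega_{m-1}} > r_{\max}^n$). I will call these $(\star)$ and $(\star\star)$.

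For (i), $(\star\star)$ bounds $|\omega|$ uniformly, so $\Lambda_n$ is finite. For the covering, given $\theta \in \mathbb{W}$ one has $\prod_{k=1}^j r_{\theta_k} \to 0$, so there is a unique prefix $\omega$ of $\theta$ with $r_{\omega_1}\cdots r_{\omega_{m-1}} > r_{\max}^n \geq r_\omega$; hence $\omega \in \Lambda_n$ and $\pi_0(\theta) \in F_\omega$, and combined with $F = \pi_0(\mathbb{W})$ this yields $F = \bigcup_{\omega\in\Lambda_n} F_\omega$. For (ii), fix $\omega \in \Lambda_n$ and for each $\theta' \in \mathbb{W}$ let $\nu(\theta')$ be the $\Lambda_{n+1}$-prefix of $\omega\theta'$ produced by (i); no strict prefix $\tau$ of $\omega$ can serve, because $r_\tau > r_{\max}^n > r_{\max}^{n+1}$ disqualifies $\tau$ from $\Lambda_{n+1}$. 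Hence $\nu(\theta')$ either equals or extends $\omega$, so $F_{\nu(\theta')} \subseteq F_\omega$, and setting $\Lambda' = \{\nu(\theta') : \theta' \in \mathbb{W}\} \subseteq \Lambda_{n+1}$ gives $F_\omega = \bigcup_{\nu \in \Lambda'} F_\nu$.

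For (iii), the same prefix-disqualification as in (ii) shows that no element of $\Lambda_n$ is a prefix of another. Distinct $\omega, \nu \in \Lambda_n$ thus share a maximal common prefix $\tau$ and branch at position $|\tau|+1$ into letters $i \neq j$; then $F_\omega \cap F_\nu \subseteq S_\tau(F_i \cap F_j)$, and the separation condition $S_1(1) \leq S_2(0) < \ldots$ in the hypotheses forces $F_i \cap F_j$ to contain at most one point. For (iv), iterating the self-similarity identity \eqref{self_similarity} and using non-atomicity of $\mu$ together with the fact that distinct $S_i([0,1])$ meet in at most one point yields the product formula $\mu(F_\omega) = \prod_k \mu_{\omega_k}$; substituting $\mu_i = \nu_i r_i^{d_H}$ gives $\mu(F_\omega) = \nu_\omega r_\omega^{d_H}$, and $(\star)$ together with $\nu_\omega \geq \nu_{\min}^{|\omega|} \geq \nu_{\min}^n$ (using $(\star\star)$ and $\nu_{\min} \leq 1$) yields the claimed bound. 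Finally, for (v), the condition $w \in \Lambda_n$ reads $r_{\max}^n \in [r_w, r_w/r_{w_{|w|}})$; in log scale this interval has width $|\log r_{w_{|w|}}| \geq |\log r_{\max}|$, which is the spacing of $\{n|\log r_{\max}| : n \in \mathbb{N}\}$, so at least one $n$ works. The decomposition $F_w = \bigcup_{\nu \in \Lambda'_m} F_\nu$ for $m \geq n$ then follows by iterating (ii).

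The main subtle point is $\nu_{\min} \leq 1$ in (iv): it follows from the Moran identity $\sum_i r_i^{d_H} = 1$ (which holds under the open set condition built into the spatial arrangement of the $S_i$) combined with $\sum_i \mu_i = \sum_i \nu_i r_i^{d_H} = 1$, since if every $\nu_i$ were strictly greater than $1$ the second sum would exceed the first. All remaining steps are routine bookkeeping with the defining inequalities of $\Lambda_n$ and the self-similar structure of $\mu$.
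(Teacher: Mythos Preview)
Your proof is correct and complete. The paper itself does not give a proof of this lemma but simply cites \cite[Lemma 2.7]{ES}, so there is no in-paper argument to compare against; your self-contained argument, built on the two bookkeeping facts $(\star)$ and $(\star\star)$ together with the Moran identity for $\nu_{\min}\le 1$, is exactly the kind of direct verification one would expect and fills in what the paper outsources.
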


If the measure $\mu$ is chosen as $\mu_i=r_i^{d_H}$ and thus $\nu_i = 1,~ i=1,...,N$, we get an estimate similar to \cite[Lemma 3.5(iv)]{HYC}. Note that these ideas can be used to generalize the corresponding results in \cite{HYC}.
\begin{proof}
\cite[Lemma 2.7]{ES}
\end{proof}

We introduce a sequence of functions approximating the Delta functional. Hereby, we use the notation of \cite{HYC}.  We prepare this definition by defining the $n$-neighbourhood of $x\in F$ for $n\in\mathbb{N}$ by
\begin{align*}
D^0_n(x)\coloneqq \bigcup\{F_w:w\in\Lambda_n,~ x\in F_w\}.
\end{align*}
Note that $D_n^0(x)$ consists of at least one element of $\{F_w, w\in\Lambda_n\}$, which follows from Lemma \ref{partitionslemma}\ref{partitionslemma_1}, and of at most two elements since pairs of these elements intersect in at most one point. From the latter and the definition of $\Lambda_n$ it follows 
\begin{align}
|D_n^0(x)|\leq 2r_{\max}^n.\label{Dn0_diameter}
\end{align}
 With that, we can define the approximating functions for $x\in F$ and $n\geq 1$ by
\begin{align*}
f_n^x=\mu(D_n^0(x))^{-1}\mathds{1}_{D_n^0(x)}.
\end{align*}
From Lemma \ref{partitionslemma}\ref{partitionslemma_estimate} it follows
\begin{align}
||f_n^x||^2_{\mu}=\mu(D_n^0(x))^{-1}\leq r_{\max}^{-nd_H}r_{\min}^{-d_H}\nu_{\min}^{-n}.\label{f_n^x_norm_esimate}
\end{align}
We deduce the following result.
\begin{lem}\label{approximation_result}
Let $x\in F$. It holds $\lim_{n\to\infty}\langle f_n^x,g\rangle_{\mu}=g(x)$ for any continuous $g\in\mH$.
\end{lem}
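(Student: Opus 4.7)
The plan is to exploit that $f_n^x$ is the $\mu$-normalized indicator of a shrinking neighbourhood of $x$, so $\langle f_n^x,g\rangle_\mu$ is nothing but the $\mu$-average of $g$ over $D_n^0(x)$. Once this averaging interpretation is written down, the statement becomes a standard consequence of uniform continuity of $g$ on the compact interval $[0,1]$.

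First I would check that the approximating functions are well defined: by Lemma \ref{partitionslemma}\ref{partitionslemma_estimate} we have $\mu(D_n^0(x))\ge r_{\max}^{nd_H}r_{\min}^{d_H}\nu_{\min}^{n}>0$, so the normalization $\mu(D_n^0(x))^{-1}$ makes sense and
\[
\langle f_n^x,g\rangle_\mu=\mu(D_n^0(x))^{-1}\int_{D_n^0(x)}g(y)\,d\mu(y).
\]
Next, since $\mathds{1}_{D_n^0(x)}$ integrates to $\mu(D_n^0(x))$ against $\mu$, we may rewrite the difference as
\[
\langle f_n^x,g\rangle_\mu-g(x)=\mu(D_n^0(x))^{-1}\int_{D_n^0(x)}\bigl(g(y)-g(x)\bigr)d\mu(y),
\]
so that $\bigl|\langle f_n^x,g\rangle_\mu-g(x)\bigr|\le\sup_{y\in D_n^0(x)}|g(y)-g(x)|$.

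Now I would use the diameter bound \eqref{Dn0_diameter}, namely $|D_n^0(x)|\le 2r_{\max}^{n}$. Since $0<r_{\max}<1$, the set $D_n^0(x)$ collapses to $\{x\}$ as $n\to\infty$ (it always contains $x$). Because $g$ is continuous on the compact interval $[0,1]$, it is uniformly continuous; given $\varepsilon>0$, choose $\delta>0$ with $|g(y)-g(x)|<\varepsilon$ whenever $|y-x|<\delta$, and pick $n_0$ large enough that $2r_{\max}^{n}<\delta$ for all $n\ge n_0$. Then $D_n^0(x)\subseteq(x-\delta,x+\delta)$ for $n\ge n_0$, and consequently
\[
\bigl|\langle f_n^x,g\rangle_\mu-g(x)\bigr|\le \sup_{y\in D_n^0(x)}|g(y)-g(x)|<\varepsilon,
\]
which proves the claim. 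There is no real obstacle here: the only point that deserves attention is ensuring $\mu(D_n^0(x))>0$ so that the averaging is legitimate, and this is exactly what Lemma \ref{partitionslemma}\ref{partitionslemma_estimate} provides.
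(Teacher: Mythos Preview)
Your argument is correct: writing $\langle f_n^x,g\rangle_\mu$ as the $\mu$-average of $g$ over $D_n^0(x)$ and then using the diameter bound \eqref{Dn0_diameter} together with continuity of $g$ at $x$ is exactly the natural route, and every step is justified (the positivity of $\mu(D_n^0(x))$ via Lemma~\ref{partitionslemma}\ref{partitionslemma_estimate} is the only point requiring care, and you address it). The paper itself does not spell out a proof but merely cites \cite[Lemma 2.8]{ES}, so there is nothing further to compare; your self-contained argument is what one would expect such a citation to unpack to. One minor remark: you invoke uniform continuity on $[0,1]$, but since $x$ is fixed throughout, ordinary continuity of $g$ at $x$ already suffices.
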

\begin{proof}
\cite[Lemma 2.8]{ES}
\end{proof}

\begin{lem}\label{resolvent_estimate}
Let $x_1,x_2\in F$ and $m,n\geq 1.$ Then,
\begin{align*}
\left|\int_0^1\int_0^1\rho_1^b(y,z)f_m^{x_1}(y)f_n^{x_2}(z)d\mu(y)d\mu(z)-\rho_1^b(x_1,x_2)\right|\leq2 L_1(r_{\max}^n+r_{\max}^m),
\end{align*}
where $L_1$ denotes the Lipschitz constant of $\rho_1^b$. 
\end{lem}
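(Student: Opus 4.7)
The plan is to exploit the fact that $f_m^{x_1}$ and $f_n^{x_2}$ are probability densities (with respect to $\mu$) supported on small neighborhoods, combined with the Lipschitz continuity of $\rho_1^b$ from Proposition \ref{resolvent_density_lipschitz}.

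First I would observe that by construction $\int_0^1 f_m^{x_1}\,d\mu = \int_0^1 f_n^{x_2}\,d\mu = 1$, since $f_k^x = \mu(D_k^0(x))^{-1}\mathds{1}_{D_k^0(x)}$. This lets me rewrite $\rho_1^b(x_1,x_2)$ as $\int_0^1\int_0^1 \rho_1^b(x_1,x_2) f_m^{x_1}(y) f_n^{x_2}(z)\,d\mu(y)d\mu(z)$, so after subtracting and pulling the absolute value inside the double integral, the quantity to bound becomes
\[
\int_0^1\int_0^1 \bigl|\rho_1^b(y,z)-\rho_1^b(x_1,x_2)\bigr|\,f_m^{x_1}(y) f_n^{x_2}(z)\,d\mu(y)d\mu(z).
\]

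Next I would control the integrand pointwise on the support of $f_m^{x_1}\otimes f_n^{x_2}$, namely $D_m^0(x_1)\times D_n^0(x_2)$. For $y\in D_m^0(x_1)$, since $x_1\in D_m^0(x_1)$ and $|D_m^0(x_1)|\leq 2 r_{\max}^m$ by \eqref{Dn0_diameter}, we have $|y-x_1|\leq 2r_{\max}^m$; similarly $|z-x_2|\leq 2r_{\max}^n$. Then using the triangle inequality together with Proposition \ref{resolvent_density_lipschitz} applied in each variable (the second directly, and the first via the symmetry $\rho_1^b(y,z)=\rho_1^b(z,y)$) gives
\[
\bigl|\rho_1^b(y,z)-\rho_1^b(x_1,x_2)\bigr|\leq L_1|y-x_1|+L_1|z-x_2|\leq 2L_1\bigl(r_{\max}^m+r_{\max}^n\bigr).
\]

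Finally I would factor this uniform bound out of the integral and use that the remaining product integrates to $1$, yielding the claimed estimate $2L_1(r_{\max}^n+r_{\max}^m)$. There is no real obstacle here; the only point to be careful about is the use of symmetry of $\rho_1^b$ to obtain Lipschitz continuity in the first coordinate from Proposition \ref{resolvent_density_lipschitz}, and the verification that $x_1$ genuinely lies in $D_m^0(x_1)$ (which is immediate from the definition of $D_m^0$ and Lemma \ref{partitionslemma}\ref{partitionslemma_1}).
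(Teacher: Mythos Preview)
Your proof is correct and is the natural argument: the paper does not give its own proof but simply cites \cite[Lemma 2.9]{ES}, and your approach---writing $\rho_1^b(x_1,x_2)$ as the double average, splitting $|\rho_1^b(y,z)-\rho_1^b(x_1,x_2)|$ via the triangle inequality, and applying Proposition~\ref{resolvent_density_lipschitz} together with the diameter bound~\eqref{Dn0_diameter}---is exactly the expected proof. The care you take with symmetry to get Lipschitz continuity in the first variable is the right move.
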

\begin{proof}
\cite[Lemma 2.9]{ES}
\end{proof}

\subsection{Heat Kernel Properties}\label{heat_kernels_chapter}
Preliminary for the definition of a mild solution of a heat equation defined by a white noise integral, we introduce the notion of a heat kernel. For that, let $b\in\{N,D\}$ be fixed.
\begin{defi}
 For $(t,x,y)\in(0,\infty)\times[0,1]\times[0,1]$ define
\begin{align*}
p_t^b(x,y)\coloneqq \sum_{k=1}^{\infty} e^{-\lambda_k^b t}\varphi_k^b(x)\varphi_k^b(y).
\end{align*}
This is called \textbf{heat kernel} of $\Delta_{\mu}^b$.
\end{defi}
Moreover, we define for $h\in \mathcal{H}$ $\int_0^1p_0^b(x,y)h(y)d\mu(y)=h(x)$. By part \ref{heat_kernel_3} of the next proposition this is a meaningful definition.\par 

\begin{satz}\label{heat_kernel_satz} Let $T>0$, $h\in \mathcal{H}$ and $(T_t^b)_{t\geq 0}$ be the transition semigroup associated to $\Delta_{\mu}^b$. 
\begin{enumerate}[label=(\roman*)]
\item \label{heat_kernel_0} There exists $K_T>0$ such that $\left|p_t^b(x,y)\right|<K_T$ for all $(t,x,y)\in [T,\infty)\times[0,1]^2$.
\item \label{heat_kernel_1} $p_{\cdot}(\cdot,\cdot)$ is continuous on $(0,\infty)\times[0,1]^2$.
\item \label{heat_kernel_2} $T_t^bh(x)=\left\langle p_t(x,\cdot),h(\cdot)\right\rangle_{\mu}$ in $L^2(\mu)$ for $t\in(0,\infty)$.
\item \label{heat_kernel_3} $\int_0^1 p_s(x,z)p_t(z,y)d\mu(z)=p_{t+s}(x,y)$ for all $t>0,s\geq 0, x,y\in[0,1]$.
\item \label{heat_kernel_6} For $(t,x,y)\in [0,\infty)\times[0,1]^2$ let $p_{t,x}^b(y)\coloneqq p_t^b(x,y)$. Then, $p_{t,x}^b\in\mathcal{D}\left(\Delta_{\mu}^b\right)$ and it holds 
\begin{align*}
\frac{\partial}{\partial t}p_t^b(x,y)=-\Delta_{\mu}^b p^b_{t,x}(y)
\end{align*}
for all $t\in(0,\infty)$, $x,y\in[0,1]$.
\item\label{heat_kernel_4} $p_t(x,y)\geq 0$ for all $(t,x,y)\in(0,\infty)\times[0,1]^2$.
\item \label{heat_kernel_7} $\int_0^1 p_t^N(\cdot,y)d\mu(y) = 1$ and $\int_0^1 p_t^D(\cdot,y)d\mu(y) \leq 1$ for all $t\in(0,\infty).$
\item \label{heat_kernel_5} $\sup_{x,y\in[0,1]}\left|p_t(x,y)\right|=\lVert T_t\rVert_{1\to\infty}$, where $\lVert A\rVert_{p\to q}$ denotes the operator norm of an operator $A:L^p\to L^q$.

\end{enumerate}
\end{satz}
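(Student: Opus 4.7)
The uniform summability coming from the eigenfunction bound \eqref{eigenfunction_estimate} and the spectral asymptotics \eqref{eigenwertabsch} is the engine for nearly every part. Concretely, for fixed $T>0$,
\begin{align*}
\sum_{k=1}^{\infty} e^{-\lambda_k^b t}\left|\varphi_k^b(x)\varphi_k^b(y)\right| \;\leq\; C_2^2\sum_{k=1}^{\infty} k^{\delta}e^{-C_0 k^{1/\gamma}T}\;<\;\infty
\end{align*}
uniformly on $[T,\infty)\times[0,1]^2$, which gives \ref{heat_kernel_0} at once. Since each $\varphi_k^b$ has a continuous representative in $\mathcal{D}_\mu^2$, every summand is continuous on $[T,\infty)\times[0,1]^2$, and the Weierstrass $M$-test upgrades pointwise convergence to uniform convergence of continuous functions; letting $T\downarrow 0$ proves \ref{heat_kernel_1}.

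For \ref{heat_kernel_2}, the spectral theorem gives $T_t^b h=\sum_k e^{-\lambda_k^b t}\langle h,\varphi_k^b\rangle_\mu\varphi_k^b$ in $\mathcal{H}$, while the uniform bound together with finiteness of $\mu$ lets Fubini be applied inside $\langle p_t^b(x,\cdot),h\rangle_\mu$, reproducing the same series. Part \ref{heat_kernel_3} is then an orthonormality collapse: after Fubini is legitimized the same way on the double series, $\langle\varphi_j^b,\varphi_k^b\rangle_\mu=\delta_{jk}$ reduces it to the single-series definition of $p_{t+s}^b$; the boundary case $s=0$ is handled by the extension $\int_0^1 p_0^b(x,\cdot)h\,d\mu=h(x)$ and is consistent with \ref{heat_kernel_2}.

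Part \ref{heat_kernel_6} proceeds identically: termwise differentiation in $t$ produces an extra factor $\lambda_k^b\sim k^{1/\gamma}$, which is easily absorbed by the exponential on $[T,\infty)\times[0,1]^2$, so the differentiated series converges uniformly and termwise differentiation is legitimate. The same decay yields $\sum_k (\lambda_k^b)^2 e^{-2\lambda_k^b t}|\varphi_k^b(x)|^2<\infty$, which by the spectral characterization of $\mathcal{D}(\Delta_\mu^b)$ places $p_{t,x}^b$ in the domain and allows termwise action of $\Delta_\mu^b$ in $\mathcal{H}$. Reading off the spectral coefficients matches $\partial_t p_t^b(x,y)$ with $-\Delta_\mu^b p_{t,x}^b(y)$ as asserted.

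The remaining parts lean on the Dirichlet-form structure established in the preliminaries. For \ref{heat_kernel_4}, Markovianity of $(\mathcal{E},\mathcal{F})$ and $(\mathcal{E},\mathcal{F}_0)$ makes $T_t^b$ positivity-preserving, hence by \ref{heat_kernel_2} $p_t^b(x,\cdot)\geq 0$ $\mu$-a.e., and continuity from \ref{heat_kernel_1} lifts this to pointwise non-negativity. For \ref{heat_kernel_7}, in the Neumann case $\mathbf{1}\in\mathcal{D}(\Delta_\mu^N)$ with $\Delta_\mu^N\mathbf{1}=0$, so $T_t^N\mathbf{1}=\mathbf{1}$ and \ref{heat_kernel_2} yields $\int_0^1 p_t^N(x,y)\,d\mu(y)=1$; sub-Markovianity of $T_t^D$ forces the matching inequality. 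Part \ref{heat_kernel_5} follows from \ref{heat_kernel_2} via $\|T_t^bh\|_\infty\leq\sup_{x,y}|p_t^b(x,y)|\,\|h\|_1$, with the reverse direction obtained by testing against the approximating functions $f_n^{y_0}$ of Lemma \ref{approximation_result} (which have $\|f_n^{y_0}\|_1=1$) at points $(x_0,y_0)$ nearly realizing the supremum and invoking continuity from \ref{heat_kernel_1}. The only technical obstacle throughout is the legitimacy of term-by-term manipulations; in every case this reduces to the single uniform-convergence estimate displayed at the start, and thus ultimately to Proposition \ref{eigenfunction_estimate_theorem}.
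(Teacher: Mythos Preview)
Your proposal is correct and, for the one part the paper actually proves in detail (part \ref{heat_kernel_5}), matches its argument exactly: the bound $\lVert T_t^b\rVert_{1\to\infty}\leq K$ is immediate, and the reverse inequality is obtained by testing against the approximating functions $f_n^{x_0}$ with unit $L^1$-norm and using continuity of $p_t^b$ together with Lemma~\ref{approximation_result}. For parts \ref{heat_kernel_0}--\ref{heat_kernel_7} the paper simply cites them as well-known (referring to \cite{KS}), so your explicit spectral-series arguments fill in details the paper omits; one small caveat is that in your argument for \ref{heat_kernel_4}, ``$\mu$-a.e.\ plus continuity'' only yields non-negativity on $F=\supp\mu$, and you need the linearity of the $\varphi_k^b$ on the gaps $(a_i,b_i)$ to extend to all of $[0,1]$ by convex interpolation.
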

\begin{proof}
 \ref{heat_kernel_0}-\ref{heat_kernel_7} are well-known (see e.g. \cite{KS}).  The proof of \ref{heat_kernel_5} is a standard argument.
Let $t\in(0,\infty)$ be fixed and $K\coloneqq\sup_{x,y\in[0,1]}\left|p_t^b(x,y)\right|.$ Further, let $f\in L^1(\mu)$. Then it holds for $x\in[0,1]$
\begin{align*}
\left|T_t^bf(x)\right|=\left|\int_0^1p_t(x,y)f(y)d\mu(y)\right|\leq K\int_0^1\left|f(y)\right|d\mu(y)
\end{align*}
and thus $\left\lVert T_t^b\right\rVert_{1\to\infty}\leq K$.\par
Since $p_t$ is continuous on $[0,1]^2$, there exists $x_0,y_0\in[0,1]$ such that $p_t(x_0,y_0)=K$. Define $f_n^{x_0}(x)=\frac{1}{\mu(D_n^0(x_0))}\mathbbm{1}_{D_n^0(x_0)}(x)$. We have $\left\lVert f_n^{x_0}\right\rVert_1=1$. By Lemma \ref{approximation_result}
\begin{align*}
\lim_{n\to\infty}\left\langle f_n^{x_0}(\cdot),p_t^b(\cdot,y_0)\right\rangle_{\mu} = p_t^b(x_0,y_0)=K.
\end{align*}
Hence, for all $\varepsilon>0$ there exists $n\in\mathbb{N}$ such that
\begin{align*}
\left|T_t^bf_n^{x_0}(y_0)\right|&=\frac{1}{\mu(D_n^0(x_0))}\int_{D_n^0(x_0)}p_t^b(x,y_0)d\mu(x)\\
&=\left\langle f_n^{x_0}(\cdot),p_t^b(\cdot,y_0)\right\rangle_{\mu}\\
&\geq K-\varepsilon.
\end{align*}
It follows $\left\lVert T_t^b f_n^{x_0}\right\rVert_{\infty}\geq K-\varepsilon$, which implies $\left\lVert T_t\right\rVert_{1\to\infty}\geq K$ since $\varepsilon$ can be chosen arbitrarily small. 

\end{proof}

%

An important part of the analysis of heat equations is given by estimating heat kernels. In the following proposition we prove heat kernel properties which are similar to properties on connected p.c.f. fractals established in \cite[Lemma 6.6]{HYC}, but concern all measures according to the assumptions in Section \ref{section_1} instead of only the Hausdorff measure.

\begin{satz}\label{heat_kernel_estimates}
Let $T>0$.
\begin{enumerate}[label=(\roman*)]
\item \label{heat_kernel_estimate_1} There exists $C_5(T)>0$ such that for all $(t,x,y)\in(0,T]\times[0,1]^2$
\begin{align*}
p_t^b(x,y)\leq C_5(T)t^{-\gamma\delta}.
\end{align*}
\item \label{heat_kernel_estimate_2} There exists $C_6(T)>0$ such that for all $(t,x,x',y)\in(0,T]\times[0,1]^3$
\begin{align*}
\left|p_t(x,y)-p_t(x',y)\right|\leq C_6(T)|x-x'|^{\frac{1}{2}} t^{-\frac{1}{2}-\frac{\gamma\delta}{2}}.
\end{align*}
\item \label{heat_kernel_estimate_3} There exists $C_7(T)>0$ such that for all $(s,t,x)\in(0,T]^2\times[0,1]$ with $s\leq t$
\begin{align*}
\left|p_s(x,x)-p_t^b(x,x)\right|\leq C_7(T)\left(s^{-\gamma\delta}-t^{-\gamma\delta}\right).
\end{align*}
\end{enumerate}
\end{satz}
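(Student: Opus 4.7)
My plan for (i) is to start from the spectral expansion
\[ p_t^b(x,y) = \sum_{k=1}^\infty e^{-\lambda_k^b t}\,\varphi_k^b(x)\varphi_k^b(y), \]
bound it pointwise by $\sum_k e^{-\lambda_k^b t}\left\lVert\varphi_k^b\right\rVert_\infty^2$, and insert the estimates $\left\lVert\varphi_k^b\right\rVert_\infty\leq\bar{C_2}(\lambda_k^b)^{\gamma\delta/2}$ from Proposition \ref{eigenfunction_estimate_theorem} together with $\lambda_k^b\geq C_0 k^{1/\gamma}$ from \eqref{eigenwertabsch}. This reduces matters to controlling $\sum_k(\lambda_k^b)^{\gamma\delta}e^{-\lambda_k^b t}$. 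I expect this to be the main obstacle: a naive Riemann-sum comparison against the spectral counting function $N(\lambda)\leq C\lambda^\gamma$ produces the weaker exponent $t^{-\gamma(\delta+1)}$, and to obtain the sharp $t^{-\gamma\delta}$ I would instead invoke part \ref{heat_kernel_5} of Proposition \ref{heat_kernel_satz} to identify $\sup_{x,y}p_t^b(x,y)=\left\lVert T_t^b\right\rVert_{1\to\infty}$, factor it as $\left\lVert T_{t/2}^b\right\rVert_{2\to\infty}^2$, and optimize the eigenfunction bound in the $L^2\to L^\infty$-norm via $\sup_{\lambda>0}\lambda^{\gamma\delta/2}e^{-\lambda t/2}$ rather than summing bounds term by term.

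For (ii), I would split the difference at the level of the expansion,
\[ p_t^b(x,y)-p_t^b(x',y) = \sum_{k=1}^\infty e^{-\lambda_k^b t}\bigl(\varphi_k^b(x)-\varphi_k^b(x')\bigr)\varphi_k^b(y), \]
and exploit that the $\mathcal{D}_{\lambda^1}^1$-representative of $\varphi_k^b$ is absolutely continuous with $\int_0^1(\varphi_k^{b\prime})^2\,d\lambda^1=\mathcal{E}(\varphi_k^b,\varphi_k^b)=\lambda_k^b$. Cauchy--Schwarz in Lebesgue measure yields
\[ \bigl|\varphi_k^b(x)-\varphi_k^b(x')\bigr| \leq |x-x'|^{1/2}\sqrt{\lambda_k^b}, \]
and a second Cauchy--Schwarz on the outer sum gives
\[ |p_t^b(x,y)-p_t^b(x',y)| \leq |x-x'|^{1/2}\Bigl(\sum_k \lambda_k^b e^{-\lambda_k^b t}\Bigr)^{1/2}\bigl(p_t^b(y,y)\bigr)^{1/2}. \]
The elementary inequality $\lambda e^{-\lambda t/2}\leq 2/(et)$ upgrades the first factor to $\bigl((2/(et))\sum_k e^{-\lambda_k^b t/2}\bigr)^{1/2}$, and combining with the bound from (i) on $p_t^b(y,y)$ produces $C\,|x-x'|^{1/2}\,t^{-1/2-\gamma\delta/2}$.

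For (iii), I differentiate. Setting $F(u)\coloneqq p_u^b(x,x)$, part \ref{heat_kernel_6} of Proposition \ref{heat_kernel_satz} yields $-F'(u)=\sum_k\lambda_k^b e^{-\lambda_k^b u}\varphi_k^b(x)^2$. Applying $\lambda e^{-\lambda u/2}\leq 2/(eu)$ term by term and then invoking (i) gives
\[ -F'(u) \leq \frac{2}{eu}\sum_k e^{-\lambda_k^b u/2}\varphi_k^b(x)^2 = \frac{2}{eu}\,p_{u/2}^b(x,x) \leq C\,u^{-\gamma\delta-1}. \]
Monotonicity of $F$ implies $F(s)-F(t)=-\int_s^t F'(u)\,du\leq C'(s^{-\gamma\delta}-t^{-\gamma\delta})$, which is the required inequality.

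The core difficulty is isolated in (i): extracting the tight time exponent $-\gamma\delta$ from a spectral series whose naive estimates only produce $-\gamma(\delta+1)$. The anticipated fix is to route the argument through operator norms rather than direct summation, using Proposition \ref{heat_kernel_satz}\ref{heat_kernel_5} together with semigroup factorization. Parts (ii) and (iii) then reduce to straightforward spectral manipulations built on the Dirichlet-form identity $\mathcal{E}(\varphi_k^b,\varphi_k^b)=\lambda_k^b$ and the elementary optimization $\lambda e^{-\lambda u/2}\leq 2/(eu)$, combined with the bound from (i).
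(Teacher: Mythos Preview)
Your argument for (iii) is correct and matches the paper's: both compute $-\partial_u p_u^b(x,x)$, bound it by a constant times $u^{-1-\gamma\delta}$ using $\lambda e^{-\lambda u/2}\le 2/(eu)$ together with (i), and integrate. The paper expresses the derivative as $-2\,\mE\bigl(p^b_{u/2,x},p^b_{u/2,x}\bigr)$ while you use the spectral series directly, but the computations are equivalent.

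In (ii) there is a genuine loss. After your second Cauchy--Schwarz you arrive at
\[
\bigl|p_t^b(x,y)-p_t^b(x',y)\bigr|\le |x-x'|^{1/2}\Bigl(\sum_k\lambda_k^b e^{-\lambda_k^b t}\Bigr)^{1/2}\bigl(p_t^b(y,y)\bigr)^{1/2}.
\]
The factor $\sum_k e^{-\lambda_k^b t/2}$ you then produce is the trace of $T^b_{t/2}$, and by \eqref{eigenwertabsch} it scales like $t^{-\gamma}$, not like a constant. Combined with $p_t^b(y,y)\le C_5(T)\,t^{-\gamma\delta}$ you get only $t^{-(1+\gamma+\gamma\delta)/2}$, off from the target by $t^{-\gamma/2}$. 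The culprit is precisely that second Cauchy--Schwarz, which strips $\varphi_k^b(y)$ out of the sum. The paper avoids this by bounding the Dirichlet energy of $p_t^b(\cdot,y)$ directly:
\[
\bigl|p_t^b(x,y)-p_t^b(x',y)\bigr|^2\le |x-x'|\,\mE\bigl(p_t^b(\cdot,y)\bigr)
=|x-x'|\sum_k\lambda_k^b e^{-2\lambda_k^b t}\varphi_k^b(y)^2,
\]
and then $\lambda_k^b e^{-\lambda_k^b t}\le (et)^{-1}$ yields $\mE(p_t^b(\cdot,y))\le (et)^{-1}p_t^b(y,y)\le C\,t^{-1-\gamma\delta}$, with the $\varphi_k^b(y)^2$ kept inside so that (i) applies. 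Your ingredients are the right ones; only the order of operations needs to change.

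For (i), routing through $\lVert T_t^b\rVert_{1\to\infty}$ is exactly what the paper does---but the paper does not derive the bound here; it cites \cite[Proposition~B.3.7]{KA} and \cite[Theorem~2.1]{ES}. Your proposed derivation, factoring as $\lVert T^b_{t/2}\rVert_{2\to\infty}^2$ and ``optimizing via $\sup_{\lambda}\lambda^{\gamma\delta/2}e^{-\lambda t/2}$'', does not yield the sharp exponent from the inputs you list. Any bound on $T^b_{t/2}f(x)=\sum_k e^{-\lambda_k^b t/2}\hat f_k\varphi_k^b(x)$ that replaces $\varphi_k^b(x)$ by $\bar C_2(\lambda_k^b)^{\gamma\delta/2}$ and then controls the $k$-sum by $\ell^1$--$\ell^\infty$ or Cauchy--Schwarz reintroduces a trace-type quantity and, via \eqref{eigenwertabsch}, an extra $t^{-\gamma}$; you end up with $t^{-\gamma(\delta+1)}$, exactly the exponent you were trying to escape. (Even in the classical case $\mu=\lambda^1$ this termwise route fails unless one inserts the sharp bound $\lVert\varphi_k\rVert_\infty=O(1)$, which Proposition~\ref{eigenfunction_estimate_theorem} does not provide in general.) The sharp bound $t^{-\gamma\delta}$ genuinely requires the ultracontractivity machinery in the cited references rather than a direct spectral estimate, so this part cannot be completed along the lines you sketch.
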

\begin{proof}
\begin{enumerate}[label=(\roman*)]
\item We can use \cite[Proposition B.3.7]{KA} as in \cite[Theorem 2.1]{ES} to obtain the existence of a constant $C_5(1)$ such that $\left\lVert T_t^b\right\rVert_{1\to\infty}\leq C_5(1)t^{-\gamma\delta}$ for $t\in(0,1]$. With Proposition \ref{heat_kernel_satz} \ref{heat_kernel_5} it follows
\begin{align*}
\sup_{x,y\in[0,1]}p_t(x,y)\leq C_5(1)t^{-\gamma\delta}, ~~ t\in(0,1].
\end{align*}
If $T>1,$ the assertion follows from the previous inequality and the fact that $p^b$ is continuous and thus bounded on $[1,T]\times[0,1]^2$.
\item Let $t\in[0,T], y\in[0,1]$. By using \ref{heat_kernel_estimate_1} we get for $x\in[0,1]$
\begin{align*}
\left\lVert p^b_{\frac{t}{2}}(x,\cdot)\right\rVert_{\mu}^2=\int_0^1p^b_{\frac{t}{2}}(x,y)^2d\mu(y)=p_{t}(x,x)\leq C_5(T)t^{-\gamma\delta}.
\end{align*}
We set $u=p^b_{\frac{t}{2}}(\cdot,y)$. With \cite[Lemma 1.3.3(i)]{FOD}, the contractivity of $S_{\frac{t}{2}}^b$ and the above inequality it follows
\begin{align*}
\mE\left(S^b_{\frac{t}{2}}u,S^b_{\frac{t}{2}}u\right)&\leq \frac{1}{t}\left(\lVert u\rVert^2-\left\lVert S^b_{\frac{t}{2}} u\right\rVert^2\right)\\
&\leq \frac{1}{t}\lVert u\rVert^2\\
&\leq 2C_5(T)t^{-1-\gamma\delta}.
\end{align*}
Since it holds $p^b_t(\cdot,y)\in\mathcal{D}\left(\Delta_{\mu}^b\right)$ we can use the Cauchy-Schwarz inequality to get for $x,x'\in[0,1]$
\begin{align*}
\left|p_t^b(x,y)-p_t^b(x',y)\right|\leq \int_{x}^{x'}\left|\frac{\partial}{\partial z}p_t^b(z,y)\right|dz\leq |x-x'|^{\frac{1}{2}}\left(\int_{0}^{1}\left|\frac{\partial}{\partial z}p_t^b(z,y)\right|^2dz\right)^{\frac{1}{2}}.
\end{align*}
We obtain
\begin{align*}
\left|p_t^b(x,y)-p_t^b(x',y)\right|^2&\leq |x-x'|\mE\left(p_t^b(\cdot,y),p_t^b(\cdot,y)\right)\\&=|x-x'|\mE\left(S^b_{\frac{t}{2}}u,S^b_{\frac{t}{2}}u \right)\\&\leq \left|x-x'\right|2C_5(T)t^{-1-\gamma\delta}
\end{align*}
and therefore
\begin{align*}
\sup_{x,x'\in[0,1]}\frac{\left|p_t^b(x,y)-p_t^b(x',y)\right|}{\left|x-x'\right|^{\frac{1}{2}}}\leq \sqrt{2C_5(T)}t^{-\frac{1}{2}-\frac{\gamma\delta}{2}}.
\end{align*}

\item We have
\begin{align*}
\frac{\partial}{\partial t}p_t^b(x,x)&=\frac{\partial}{\partial t}\int_0^1 p^b_{\frac{t}{2}} (x,y)^2d\mu(y)\\
&= \int_0^1 \frac{\partial}{\partial t}p^b_{\frac{t}{2}} (x,y)^2d\mu(y)\\
&= 2 \int_0^1 p^b_{\frac{t}{2},x}(y)\frac{\partial}{\partial t}p^b_{\frac{t}{2},x}(y)d\mu(y)\\
&= -2 \int_0^1 p^b_{\frac{t}{2},x}(y)\Delta_{\mu}^b p^b_{\frac{t}{2},x}(y)d\mu(y)\\
&= -2 \mE\left(p^b_{\frac{t}{2},x},p^b_{\frac{t}{2},x}   \right)\leq 0,
\end{align*}
where we can interchange integral and derivative since $\frac{\partial}{\partial t}p^b_{\frac{t}{2}} (x,y)^2$ is bounded on $[t-\varepsilon,t+\varepsilon]\times[0,1]^2$ for an appropriate $\varepsilon>0$. From the previous identities and the proof of part \ref{heat_kernel_estimate_2}  we also get that there exists a $C_7'(T)$ such that
\begin{align*}
\left|\frac{\partial}{\partial t}p_t^b(x,x)\right|=\left|2\mE\left(p^b_{\frac{t}{2},x}\right)\right|\leq C_7'(T)t^{-1-\gamma\delta},
\end{align*}
where the last inequality follows as in the proof of \ref{heat_kernel_estimate_2}. Therefore
\begin{align*}
2^{\gamma} C_7'(T)t^{-1-\gamma\delta}  \leq \frac{\partial}{\partial t} p_t^b(x,x)\leq 0
\end{align*}
and we can conclude that for all $x\in[0,1]$
\begin{align*}
\left|p_s^b(x,x)-p^b_t(x,x)\right|\leq C_7'(T)\int_s^t z^{-1-\gamma\delta}dz=C_7'(T)\left(s^{-\gamma\delta}-t^{-\gamma\delta}\right).
\end{align*}
\end{enumerate}
\end{proof}

\section{Analysis of Stochastic Heat Equations}\label{section_4}
\subsection{Preliminaries}

Let $(\Omega,\mathcal{F},\mathbb{F}\coloneqq (\mathcal{F}_t)_{t\geq 0},\mathbb{P})$ be a filtered probability space statisfying the usual conditions. The object of study in this section is the stochastic PDE
\begin{align}
\begin{split}
\frac{\partial}{\partial t}u(t,x)&=\Delta_{\mu}^b u(t,x)+f(t,u(t,x))+g(t,u(t,x))\xi(t,x)\label{spde_heat}\\
u(0,x)&=u_0(x)
\end{split}
\end{align}
for $(t,x)\in[0,T]\times [0,1]$, where $T>0$, $b\in \{N,D\}$, $u_0:\Omega\times[0,1]\to\mathbb{R}$, $f,g:\Omega\times[0,T]\times[0,1]\to\mathbb{R}$. Further, $\xi$ denotes a $\mathbb{F}$-space-time white noise on $([0,1],\mu)$, that is a mean-zero set-indexed Gaussian process on $\mathcal{B}\left([0,T]\times[0,1]\right)$ such that $\mathbb{E}\left[\xi(A)\xi(B)\right]=\left|A\cap B\right|$ (compare \cite[Chapter 1]{WI}). Moreover, let for a time interval $I\subseteq[0,T]$ and a space interval $J\subseteq[0,\infty)$ $\mathcal{P}_{I,J}$ be the $\sigma$-algebra generated by simple functions on $\Omega\times I\times J$, where a simple function on $\Omega\times I \times J$ is defined as a finite sum of functions $h:\Omega\times I\times J\to\mathbb{R}$ of the form
\begin{align*}
h(\omega,t,x)=X(\omega)\mathds{1}_{(a,b]}(t)\mathds{1}_{B}(x), ~~ (\omega,t,x)\in \Omega\times I\times J
\end{align*}
with $X$ bounded and $\mathcal{F}_a$-measurable, $a,b\in I,a<b$ and $B\in\mathcal{B}(J)$.
\begin{defi}
\begin{enumerate}[label=(\roman*)]
\item
Let $q\geq 2,~ T>0$ be fixed. Let $S_{q,T}$ be the space of $[0,T]\times[0,1]$-indexed processes $v$ which are predictable (i.e. measurable from $\mathcal{P}_{[0,T],[0,1]}$ to $\mathcal{B}(\mathbb{R}))$ and for which it holds
\begin{align*}
\left\lVert v\right\rVert_{q,T}\coloneqq \sup_{t\in[0,T]}\sup_{x\in[0,1]}\left(\mathbb{E}|v(t,x)|^q\right)^{\frac{1}{q}}<\infty.
\end{align*}
Furthermore, define $\mathcal{S}_{q,T}$ as the space of equivalence classes of processes in $S_{q,T}$, where two processes $v_1,v_2$ are equivalent if $v_1(t,x)=v_2(t,x)$ almost surely for all $(t,x)\in[0,T]\times[0,1]$. 
\item For processes which are not time-dependent, we define analogously $S_q$ as the space of $[0,1]$-indexed processes $v$ which are measurable from $\mathcal{F}_0\otimes\mathcal{B}(F)$ into $\mathcal{B}(\mathbb{R})$ and which satisfy 
\begin{align*}
\left\lVert v\right\rVert_{q}\coloneqq\sup_{x\in[0,1]}\left(\mathbb{E}|v(x)|^q\right)^{\frac{1}{q}}<\infty.
\end{align*}
and the space $\mathcal{S}_q$ by identifying processes $v_1$ and $v_2$ for which it holds $v_1(x)=v_2(x)$ almost surely for all $x\in[0,1]$. 
\end{enumerate}
\end{defi}
Note that $\mathcal{S}_{q}$ and $\mathcal{S}_{q,T}$ are Banach spaces. The proof works by using standard arguments, so we skip it here.

\subsection{Existence, Uniqueness and Hölder Continuity}
Let $b\in\{N,D\}$ and $T>0$ be fixed for this chapter. We define the concept of a solution to \eqref{spde_heat} which we observe in this paper.
\begin{defi}
A \textbf{mild solution} to the SPDE \eqref{spde_heat} is defined as a predictable $[0,T]\times[0,1]$-indexed process such that for every $(t,x)\in[0,T]\times[0,1]$ it holds almost surely
\begin{align}
\begin{split}
u(t,x)=&\int_0^1p_t^b(x,y)u_0(y)d\mu(y)+\int_0^t\int_0^1p_{t-s}^b(x,y)f(s,u(s,y))d\mu(y)ds\\
&+\int_0^t\int_0^1p_{t-s}^b(x,y)g(s,u(s,y))\xi(s,y)d\mu(y)ds,\label{mild_solution_heat}
\end{split}
\end{align}
where the last term is a stochastic integral in the sense of \cite[Chapter 2]{WI}.
\end{defi}

\par
In this chapter we assume the following, which is adapted from \cite[Hypothesis 6.2]{HYC}.

\begin{hyp}\label{hypo_heat} There exists $q\geq 2$ such that
\begin{enumerate}[label=(\roman*)]
\item \label{hypo_heat_i} $u_0\in \mathcal{S}_q$
\item \label{hypo_heat_ii} $f$ and $g$ are predictable and satisfy the following Lipschitz and linear growth conditions: There exists $L>0$ and a real predictable process $M:\Omega\times[0,T]\to\mathbb{R}$ with \linebreak $\sup_{s\in[0,T]}\lVert M(s)\rVert_{L^q(\Omega)}$ such that for all $(w,t,x,y)\in\Omega\times[0,T]\times\mathbb{R}^2$
\begin{align*}
|f(\omega,t,x)-f(\omega,t,y)|+|g(\omega,t,x)-g(\omega,t,y)|&\leq L|x-y|,\\ |f(\omega,t,x)|+|g(\omega,t,x)|&\leq M(w,t)+L|x|.
\end{align*} 
\end{enumerate}
\end{hyp}

We need some preparing lemmas before proving stochastic continuity results. The following lemma shows how to find upper estimates of functionals of the heat kernel by using the resolvent density.

\begin{lem}\label{estimate_resolvent_lemma}
Let $g\in\mathcal{H}$ and $t\in(0,\infty)$. Then,
\begin{align*}
\int_0^t\int_0^1\left(\int_0^1 p_{s}^b(x,y)h(y)d\mu(y)\right)^2d\mu(x)ds\leq \frac{e^{2t}}{2}\int_0^t\int_0^1 \rho_1(x,y)h(x)g(y)d\mu(x)d\mu(y).
\end{align*}
\begin{proof}
Let $g=\sum_{k=1}^{\infty}g_k\varphi_k^b$. We adapt ideas from \cite[Lemma 4.6]{HYE}. By using Lemma \ref{heat_kernel_satz}\ref{heat_kernel_2}, the self-adjointness of the semigroup $\left(T_t^b\right)_{t\geq 0}$ and Lemma  \ref{resolvent_semigroup_lemma},
\begin{align}
\int_0^t\int_0^1\left(\int_0^1 p_{s}^b(x,y)h(y)d\mu(y)\right)^2d\mu(x)ds&=\int_0^t\left\lVert S_{s}^b h \right\rVert^2_{\mu}ds\notag\\
&= \int_0^t\left\langle S_{s}^b h, S_{s}^b h \right\rangle_{\mu} ds\notag\\
&= \int_0^t\left\langle S_{2s}^b h, h \right\rangle_{\mu} ds\notag\\
&\leq e^{2t}\int_0^te^{-2s}\left\langle S_{2s}^b h, h \right\rangle_{\mu} ds\notag\\
&\leq e^{2t}\left\langle \int_0^{\infty} e^{-2s}S_{2s}^b h ds, h \right\rangle_{\mu} \notag\\
&= \frac{e^{2t}}{2}\left\langle\int_0^1 \rho_1^b(\cdot,y)h(y)d\mu(y), h \right\rangle_{\mu} \notag\\
&= \frac{e^{2t}}{2}\int_0^1\int_0^1 \rho_1^b(x,y)h(x)h(y)d\mu(x)d\mu(y). \notag
\end{align}
\end{proof}
\end{lem}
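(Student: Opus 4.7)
The plan is to recognize the left-hand side as a time integral of squared semigroup norms in $L^2(\mu)$, and then use the resolvent-semigroup identity of Lemma \ref{resolvent_semigroup_lemma} to convert the result into a resolvent expression. The statement as written looks like it has a typo (the auxiliary function $g$ is introduced but only $h$ appears on the left, while the right-hand side mixes $h$ and $g$); I will proceed under the assumption that both are the same function $h$, so that the target inequality reads $\frac{e^{2t}}{2}\int_0^1\int_0^1\rho_1^b(x,y)h(x)h(y)d\mu(x)d\mu(y)$.

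First, by Proposition \ref{heat_kernel_satz}\ref{heat_kernel_2} the inner $y$-integral is $T_s^b h(x)$ in $L^2(\mu)$, so the left-hand side equals $\int_0^t\lVert T_s^b h\rVert_\mu^2 ds = \int_0^t\langle T_s^b h, T_s^b h\rangle_\mu ds$. Using self-adjointness of $T_s^b$ together with the semigroup property, I collapse the two copies into $T_{2s}^b$ and obtain $\int_0^t \langle T_{2s}^b h, h\rangle_\mu ds$. Next, since the spectral decomposition over $(\varphi_k^b)_k$ yields $\langle T_{2s}^b h, h\rangle_\mu = \sum_k e^{-2s\lambda_k^b} h_k^2 \geq 0$ and $e^{-2s}\leq 1$ on $[0,t]$, I bound the above by $e^{2t}\int_0^\infty e^{-2s}\langle T_{2s}^b h, h\rangle_\mu ds$. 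A substitution $u=2s$ reduces this to $\frac{e^{2t}}{2}\int_0^\infty e^{-u}\langle T_u^b h, h\rangle_\mu du$, and Lemma \ref{resolvent_semigroup_lemma} with $\lambda=1$ identifies the integral with $\langle R_1^b h, h\rangle_\mu$. Unpacking $R_1^b$ through its density $\rho_1^b$ then produces the desired right-hand side.

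The main step requiring care is the extension of the $s$-integral from $[0,t]$ to $[0,\infty)$, which is precisely where the exponential weight is inserted. This extension relies on the nonnegativity of the quadratic form $\langle T_{2s}^b h, h\rangle_\mu$, justified by the spectral decomposition above, together with the trivial bound $e^{-2s}\leq 1$ for $s\in[0,t]$. Once these two ingredients are in place, the remaining manipulations are routine inner-product computations and a change of variables.
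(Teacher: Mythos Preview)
Your proposal is correct and follows essentially the same route as the paper: rewrite the left-hand side as $\int_0^t\langle T_{2s}^b h,h\rangle_\mu\,ds$, insert the factor $e^{2t}e^{-2s}$, extend the integral to $[0,\infty)$, and invoke Lemma~\ref{resolvent_semigroup_lemma} with $\lambda=1$ to land on $\frac{e^{2t}}{2}\langle R_1^b h,h\rangle_\mu$. Your explicit justification of the extension step via the spectral expansion $\langle T_{2s}^b h,h\rangle_\mu=\sum_k e^{-2s\lambda_k^b}h_k^2\geq 0$ is a welcome clarification (the paper leaves this implicit), and you are right that the $g$ versus $h$ in the statement is a typo.
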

This leads to a useful approximation of $p^b_{\cdot}(x,\cdot)$ for fixed $x\in F$.
\begin{lem} \label{estimate_resolvent_lemma_2}
Let $t\in(0,\infty)$ and $x\in F$.  Then,
\begin{align*}
\int_0^t\int_0^1\left(\left\langle p_{t-s}^b(\cdot,y),f_n^x\right\rangle_{\mu} - p_{t-s}^b(x,y)\right)^2d\mu(y)ds\leq 4L_1e^{2t}r_{\max}^{n}.
\end{align*}
\end{lem}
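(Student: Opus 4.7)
The plan is to expand the square, use Chapman--Kolmogorov to convert the time integrand into a combination of heat-kernel evaluations, and then use the Laplace transform identity $\int_0^\infty e^{-u} p^b_u(a,b)\,du = \rho_1^b(a,b)$ to reduce the estimate to Lemma \ref{resolvent_estimate} and Proposition \ref{resolvent_density_lipschitz}.

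First I would use symmetry of the heat kernel together with Proposition \ref{heat_kernel_satz} \ref{heat_kernel_2} to rewrite the first factor as $T^b_{t-s} f_n^x(y)$. Expanding the square in $y$ and applying Chapman--Kolmogorov (Proposition \ref{heat_kernel_satz} \ref{heat_kernel_3}) to each of the three resulting bilinear terms then gives
\[\int_0^1 \bigl(T^b_{t-s}f_n^x(y) - p^b_{t-s}(x,y)\bigr)^2\,d\mu(y) = H(2(t-s)),\]
where
\[H(u) := \int_0^1\!\!\int_0^1 p^b_u(z,z') f_n^x(z)f_n^x(z')\,d\mu(z)\,d\mu(z') - 2\int_0^1 p^b_u(z,x) f_n^x(z)\,d\mu(z) + p^b_u(x,x).\]
I note $H(u)\ge 0$, since it equals $\lVert T^b_{u/2}f_n^x - p^b_{u/2}(x,\cdot)\rVert_\mu^2$.

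Next, using the trivial estimate $1\leq e^{2t}e^{-2(t-s)}$ on $[0,t]$ and the substitution $u=2(t-s)$, the left-hand side of the claim is bounded by $\tfrac{e^{2t}}{2}\int_0^\infty e^{-u}H(u)\,du$, the extension of the integration range being valid since $H\ge 0$. Then Fubini combined with Lemma \ref{resolvent_semigroup_lemma} --- which, together with pointwise continuity of $\rho_1^b$ and integrability of $p^b_u$ near $0$ via Proposition \ref{heat_kernel_estimates} \ref{heat_kernel_estimate_1} (noting $\gamma\delta<1$) --- identifies $\int_0^\infty e^{-u}p^b_u(a,b)\,du = \rho_1^b(a,b)$ pointwise, converting the bound to $\tfrac{e^{2t}}{2}\bigl(I_1 - 2I_2 + \rho_1^b(x,x)\bigr)$, with
\[I_1 := \int_0^1\!\!\int_0^1 \rho_1^b(z,z') f_n^x(z) f_n^x(z')\,d\mu(z)\,d\mu(z'), \qquad I_2 := \int_0^1 \rho_1^b(z,x) f_n^x(z)\,d\mu(z).\]

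Finally, Lemma \ref{resolvent_estimate} (applied with $m=n$, $x_1=x_2=x$) gives $|I_1 - \rho_1^b(x,x)|\le 4L_1 r_{\max}^n$, and Proposition \ref{resolvent_density_lipschitz} combined with $\int f_n^x\,d\mu = 1$ and the diameter bound \eqref{Dn0_diameter} gives $|I_2 - \rho_1^b(x,x)|\le 2L_1 r_{\max}^n$. The triangle inequality applied to $I_1 - 2I_2 + \rho_1^b(x,x) = (I_1-\rho_1^b(x,x)) - 2(I_2 - \rho_1^b(x,x))$ then yields a bound of $8L_1 r_{\max}^n$; multiplying by $e^{2t}/2$ gives the claim. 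The main obstacle I expect is the bookkeeping in tracking the three time scales ($t-s$, $2(t-s)$, and $u$) through the Chapman--Kolmogorov and Laplace-transform reductions, and rigorously passing from the operator-level identity of Lemma \ref{resolvent_semigroup_lemma} to the pointwise kernel identity used here.
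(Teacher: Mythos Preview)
Your argument is correct and reaches the stated bound with the exact constant, but it proceeds by a genuinely different route than the paper. The paper never works with the point evaluation $p^b_{t-s}(x,y)$ directly; instead it inserts a second approximating sequence $f_m^x$, writes $p^b_{t-s}(x,y)=\lim_{m\to\infty}\langle p^b_{t-s}(\cdot,y),f_m^x\rangle_\mu$, and uses Fatou's lemma twice to bound the left-hand side by
\[
\liminf_{m\to\infty}\int_0^t\int_0^1\Bigl(\int_0^1 p^b_{t-s}(y,z)\bigl(f_n^x(z)-f_m^x(z)\bigr)d\mu(z)\Bigr)^2 d\mu(y)\,ds.
\]
This puts everything in the form $\int_0^t\lVert T^b_{t-s}h\rVert_\mu^2\,ds$ with $h=f_n^x-f_m^x\in\mathcal{H}$, so the already-proved Lemma~\ref{estimate_resolvent_lemma} applies verbatim; the resolvent bilinear form is then controlled by four applications of Lemma~\ref{resolvent_estimate}, giving $8L_1(r_{\max}^n+r_{\max}^m)$, and sending $m\to\infty$ finishes.

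What this buys the paper is that the passage from semigroup to resolvent happens entirely at the $L^2$ operator level (Lemma~\ref{resolvent_semigroup_lemma} is used only inside Lemma~\ref{estimate_resolvent_lemma}), so the pointwise kernel identity $\int_0^\infty e^{-u}p^b_u(a,b)\,du=\rho_1^b(a,b)$ --- which you correctly flag as the main technical obstacle in your approach --- is never needed. Your route is more direct and avoids the two Fatou steps, at the cost of having to justify that pointwise identity (your sketch via continuity of $\rho_1^b$ and the integrability from Proposition~\ref{heat_kernel_estimates}\ref{heat_kernel_estimate_1} with $\gamma\delta<1$ is adequate for this). Either way the arithmetic lands on $\tfrac{e^{2t}}{2}\cdot 8L_1 r_{\max}^n$.
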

\begin{proof}
Let $x\in F$. In preparation for the proof we calculate
\begin{align}
&\left|\int_0^1\int_0^1\rho_1^b(z,y)(f_m^x(z)-f_n^x(z))(f_m^x(y)-f_n^x(y))d\mu(z)d\mu(y)\right|\notag\\
&= \left|\int_0^1\int_0^1\rho_1^b(z,y)\left(f_m^x(z)f_m^x(y)-f_m^x(z)f_n^x(y)-f_n^x(z)f_m^x(y)+f_n^x(z)f_n^x(y)\right)d\mu(z)d\mu(y)\right|\notag\\
&= \Big|\int_0^1\int_0^1\rho_1^b(z,y)f_m^x(z)f_m^x(y)-\rho_1^b(x,x)-\rho_1^b(z,y)f_m^x(z)f_n^x(y)+\rho_1^b(x,x)\notag\\&~~~-\rho_1^b(z,y)f_n^x(z)f_m^x(y)+\rho_1^b(x,x)+\rho_1^b(z,y)f_n^x(z)f_n^x(y)-\rho_1^b(x,x)d\mu(z)d\mu(y)\Big|\notag\\
&\leq 8L_1(r_{\max}^m+r_{\max}^n)\label{resolvent_approximation_estimate},
\end{align}
where we have used Lemma \ref{resolvent_estimate} in the last step.
Further, we note that, since for any $(t,y)\in[0,\infty)\times[0,1]$ $p_t^b(\cdot,y)$ is an element of $\mathcal{H}$ and the inner product is continuous   in each argument, it holds for any $h\in\mathcal{H}$
\begin{align}
\left\langle p_t^b(\cdot,y),h\right\rangle_{\mu}&=
\left\langle \sum_{k=1}^{\infty}e^{-\lambda_k^bt}\varphi_k^b(y)\varphi_k^b,g\right\rangle_{\mu}\notag\\
&=\left\langle \lim_{m\to\infty}\sum_{k=1}^{m}e^{-\lambda_k^bt}\varphi_k^b(y)\varphi_k^b,g\right\rangle_{\mu}\notag\\
&= \sum_{k=1}^{\infty}e^{-\lambda_k^bt}\varphi_k^b(y)\left\langle\varphi_k^b,g\right\rangle_{\mu}.\label{inner_product_cont}
\end{align}
Now, let $s,t\in[0,\infty), s<t$ and $x\in F$. By Lemma \ref{approximation_result} and Fatou's Lemma,
\begin{align*}
&\int_0^1\left( \left\langle p_{t-s}^b(\cdot,y),f_n^x\right\rangle_{\mu} - p_{t-s}^b(x,y)\right)^2d\mu(y) \\ &=
\int_0^1 \left(\sum_{k=1}^{\infty}e^{-\lambda_k^b(t-s)}\varphi_k^b(y)\left\langle \varphi_k^b,f_n^x\right\rangle_{\mu} - p_{t-s}^b(x,y)\right)^2d\mu(y)\\
&=\int_0^1 \left(\sum_{k=1}^{\infty}e^{-\lambda_k^b(t-s)}\left[\left\langle \varphi_k^b,f_n^x\right\rangle_{\mu} - \varphi_k^b(x)\right]\varphi_k^b(y)\right)^2d\mu(y)\\
&= \sum_{k=1}^{\infty}e^{-2\lambda_k^b(t-s)}\left[\left\langle \varphi_k^b,f_n^x\right\rangle_{\mu} - \varphi_k^b(x)\right]^2\\
&= \sum_{k=1}^{\infty}e^{-2\lambda_k^b(t-s)}\left[\left\langle \varphi_k^b,f_n^x\right\rangle_{\mu} - \lim_{m\to\infty}\left\langle\varphi_k^b,f_m^x\right\rangle_{\mu}\right]^2\\
&= \sum_{k=1}^{\infty}\lim_{m\to\infty}e^{-2\lambda_k^b(t-s)}\left[\left\langle \varphi_k^b,f_n^x\right\rangle_{\mu} - \left\langle\varphi_k^b,f_m^x\right\rangle_{\mu}\right]^2\\
&\leq \liminf_{m\to\infty}\sum_{k=1}^{\infty}e^{-2\lambda_k^b(t-s)}\left[\left\langle \varphi_k^b,f_n^x\right\rangle_{\mu} - \left\langle\varphi_k^b,f_m^x\right\rangle_{\mu}\right]^2.
\end{align*}
Again by Fatou's Lemma and \eqref{inner_product_cont}
\begin{align*}
&\int_0^t\int_0^1\left( \left\langle p_{t-s}^b(\cdot,y),f_n^x\right\rangle_{\mu} - p_{t-s}^b(x,y)\right)^2d\mu(y)ds \\&\leq \liminf_{m\to\infty}\int_0^t\sum_{k=1}^{\infty}e^{-2\lambda_k^b(t-s)}\left[\left\langle \varphi_k^b,f_n^x-f_m^x\right\rangle_{\mu}\right]^2ds\\
&=\liminf_{m\to\infty}\int_0^t\int_0^1\left(\int_0^1 p_{t-s}^b(y,z)(f_n^x(z)-f_m^x(z)d\mu(z))\right)^2d\mu(y)ds.\\
\end{align*}
and further
\begin{align}
&\int_0^t\int_0^1\left(\int_0^1 p_{t-s}^b(y,z)(f_n^x(z)-f_m^x(z)d\mu(z))\right)^2d\mu(y)ds\notag \\ &\leq
\frac{e^{2t}}{2} \int_0^t\int_0^1 \rho_1^b(x,y)(f^x_n(y)-f^x_m(y))(f^x_n(z)-f^x_m(z))d\mu(y)d\mu(z)\label{spatial_convergence_lemma_1}\\
&\leq 4e^{2t}L_1(r_{\max}^n+r_{\max}^m)\label{spatial_convergence_lemma_2},
\end{align}
where we have used Lemma \ref{estimate_resolvent_lemma} in \eqref{spatial_convergence_lemma_1} and \eqref{resolvent_approximation_estimate} in 
\eqref{spatial_convergence_lemma_2}. We conclude
\begin{align*}
\int_0^t\int_0^1\left( \left\langle p_{t-s}^b(\cdot,y),f_n^x\right\rangle_{\mu} - p_{t-s}^b(x,y)\right)^2d\mu(y)ds &\leq \liminf_{m\to\infty} 4L_1e^{2t}\left(r_{\max}^n+r_{\max}^m\right)\\
&\leq 4L_1e^{2t}r_{\max}^n.
\end{align*}
\end{proof}

We are now able to prove stochastic continuity properties of $v_1$ and $v_2$ which are defined as follows for $(t,x)\in[0,T]\times[0,1]$ and $v_0\in\mathcal{S}_{q,T}$
\begin{align}
v_1(t,x)&\coloneqq \int_0^t\int_0^1p_{t-s}^b g(s,v_0(s,y))\xi(s,y)d\mu(y)ds,\label{heat_1}\\
v_2(t,x)&\coloneqq \int_0^t\int_0^1p_{t-s}^b f(s,v_0(s,y))d\mu(y)ds.\label{heat_2}
\end{align}
\begin{satz} \label{stochastic_continuity_heat}
Let $q\geq 2$ be fixed. Then, there exists a constant $C_8>0$ such that for all $v_0\in \mathcal{S}_{q,T}$ $v_1$ and $v_2$ are well-defined and it holds for all $s,t\in[0,T], x,y\in[0,1]$
\begin{align*}
\mathbb{E}\left(|v_1(t,x)-v_1(t,y)|^q\right)&\leq C_8\left(1+\lVert v_0\rVert_{q,T}^q\right)|x-y|^{\frac{q}{2}},\\
\mathbb{E}\left(|v_1(s,x)-v_1(t,x)|^q\right)&\leq C_8\left(1+\lVert v_0\rVert_{q,T}^q\right)|s-t|^{q\left(\frac{1}{2}-\frac{\gamma\delta}{2}\right)},\\
\mathbb{E}\left(|v_2(t,x)-v_2(t,y)|^q\right)&\leq C_8\left(1+\lVert v_0\rVert_{q,T}^q\right)|x-y|^{\frac{q}{2}},\\
\mathbb{E}\left(|v_2(s,x)-v_2(t,x)|^q\right)&\leq C_8\left(1+\lVert v_0\rVert_{q,T}^q\right)|s-t|^{q\left(\frac{1}{2}-\frac{\gamma\delta}{2}\right)}.
\end{align*}
\end{satz}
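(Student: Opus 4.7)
The strategy for all four bounds is to reduce the stochastic or deterministic integral to a deterministic $L^2$-quantity of the heat kernel, and then to control that quantity by combining the Chapman--Kolmogorov identity of Proposition \ref{heat_kernel_satz}\ref{heat_kernel_3} with Lemma \ref{resolvent_semigroup_lemma} and the Lipschitz continuity of the resolvent density in Proposition \ref{resolvent_density_lipschitz}. For the stochastic integral $v_1$ I would first apply the Burkholder--Davis--Gundy inequality and then Minkowski's inequality for integrals, using the linear growth of $g$ from Hypothesis \ref{hypo_heat}\ref{hypo_heat_ii}, to obtain
\[
\mathbb{E}\lvert v_1(t,x) - v_1(t,y)\rvert^q \leq C_q\bigl(1 + \lVert v_0\rVert_{q,T}^q\bigr) \Bigl( \int_0^t\!\!\int_0^1 (p_{t-s}^b(x,z) - p_{t-s}^b(y,z))^2 \, d\mu(z)\,ds \Bigr)^{q/2}.
\]
The corresponding step for $v_2$ is Jensen's inequality with the non-negative kernel $|p_{t-s}^b(x,z)-p_{t-s}^b(y,z)|$ as a reference measure, followed by a single Cauchy--Schwarz application; this reproduces the same $L^2$-quantity at the price of an additional bounded factor $T^{q/2}$. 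Well-definedness of $v_1$ and $v_2$ then follows from the same template applied with $p_{t-s}^b(x,z)$ itself, using Proposition \ref{heat_kernel_estimates}\ref{heat_kernel_estimate_1} for integrability in $s$.

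Everything thus reduces to the deterministic bound $A(t;x,y) := \int_0^t\!\!\int_0^1 (p_{t-s}^b(x,z) - p_{t-s}^b(y,z))^2 d\mu(z)\,ds \leq C_T |x-y|$, which is where the main idea lies. Expanding the square and using Proposition \ref{heat_kernel_satz}\ref{heat_kernel_3} to identify each inner integral as an on- or off-diagonal heat kernel value gives $A(t;x,y) = \tfrac{1}{2}\int_0^{2t}(p_u^b(x,x) + p_u^b(y,y) - 2p_u^b(x,y))\,du$, whose integrand is non-negative. Inserting the elementary inequality $1 \leq e^{2t}e^{-u}$ on $[0,2t]$ and recognising the resulting weighted integral via Lemma \ref{resolvent_semigroup_lemma} as $\tfrac{1}{2}e^{2t}[\rho_1^b(x,x) + \rho_1^b(y,y) - 2\rho_1^b(x,y)]$, the Lipschitz bound and symmetry of $\rho_1^b$ from Proposition \ref{resolvent_density_lipschitz} produce $A(t;x,y) \leq L_1 e^{2T} |x-y|$. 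Raising to the $q/2$-th power gives the two spatial bounds.

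For the temporal estimates I would decompose, for $s<t$,
\[
v_1(t,x) - v_1(s,x) = \int_0^s\!\!\int_0^1 (p_{t-\tau}^b - p_{s-\tau}^b)(x,z)\,g(\tau,v_0(\tau,z))\,\xi(d\tau,dz) + \int_s^t\!\!\int_0^1 p_{t-\tau}^b(x,z)\,g(\tau,v_0(\tau,z))\,\xi(d\tau,dz),
\]
and analogously for $v_2$. After the same reductions the first piece gives $\int_0^s(p_{2u+2(t-s)}^b(x,x) - 2p_{2u+(t-s)}^b(x,x) + p_{2u}^b(x,x))\,du$, which telescopes as two differences of on-diagonal heat kernel values. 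By Proposition \ref{heat_kernel_estimates}\ref{heat_kernel_estimate_3} this is bounded by $C_T\int_0^s[(2u)^{-\gamma\delta} - (2u + 2(t-s))^{-\gamma\delta}]\,du$, whose direct integration yields order $(t-s)^{1-\gamma\delta}$ (the remaining boundary term $(2s)^{1-\gamma\delta}-(2t)^{1-\gamma\delta}$ being non-positive). The second piece reduces via Chapman--Kolmogorov to $\int_s^t p_{2(t-\tau)}^b(x,x)\,d\tau$, bounded by $C_T(t-s)^{1-\gamma\delta}$ using Proposition \ref{heat_kernel_estimates}\ref{heat_kernel_estimate_1}. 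Raising to $q/2$ reproduces the stated temporal exponent $q(\tfrac{1}{2}-\tfrac{\gamma\delta}{2})$.

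The main obstacle is the spatial estimate: a direct application of the pointwise Hölder bound in Proposition \ref{heat_kernel_estimates}\ref{heat_kernel_estimate_2} would yield the non-integrable singularity $(t-s)^{-1-\gamma\delta}$ at $s=t$ combined with only a factor $|x-y|^{1/2}$ inside the integral (hence merely $|x-y|^{q/4}$ after raising to $q/2$). The detour through the resolvent density, combining Chapman--Kolmogorov, the Laplace transform identity of Lemma \ref{resolvent_semigroup_lemma}, and the Lipschitz continuity of $\rho_1^b$, is therefore essential to obtain the full linear dependence on $|x-y|$ needed for the $|x-y|^{q/2}$ bound.
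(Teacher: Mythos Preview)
Your reductions via Burkholder--Davis--Gundy, Minkowski, and the linear growth condition, as well as the entire temporal argument (splitting at $s$, telescoping the diagonal heat kernel differences, invoking Proposition~\ref{heat_kernel_estimates}\ref{heat_kernel_estimate_3} and \ref{heat_kernel_estimate_1}), match the paper exactly. The genuine difference is in the spatial estimate.

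The paper controls $A(t;x,y)$ by first restricting to $x,y\in F$, approximating $p_{t-s}^b(x,\cdot)$ via $\langle p_{t-s}^b(\cdot,\cdot),f_n^x\rangle_\mu$ using Lemma~\ref{estimate_resolvent_lemma_2}, applying Lemma~\ref{estimate_resolvent_lemma} to the functions $f_n^x-f_n^y$, and then passing to the limit with Lemma~\ref{resolvent_estimate}. It then treats $x,y\in[0,1]\setminus F$ separately by exploiting the linearity of eigenfunctions on the gaps $(a_i,b_i)$, and finally patches all cases together with the triangle inequality. Your route---Chapman--Kolmogorov to write $A(t;x,y)=\tfrac12\int_0^{2t}(p_u^b(x,x)+p_u^b(y,y)-2p_u^b(x,y))\,du$, then $1\le e^{2t}e^{-u}$ and the pointwise Laplace transform identity $\int_0^\infty e^{-u}p_u^b(x,y)\,du=\rho_1^b(x,y)$---is more direct, avoids the approximation machinery of Lemmas~\ref{resolvent_estimate} and~\ref{estimate_resolvent_lemma_2} entirely, and works uniformly on $[0,1]^2$ without case distinctions. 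The only point you should make explicit is that Lemma~\ref{resolvent_semigroup_lemma} is stated at the operator level, so the pointwise kernel identity needs a one-line justification (e.g.\ via the eigenfunction expansions of $p_t^b$ and $\rho_1^b$ together with \eqref{eigenwertabsch} and \eqref{eigenfunction_estimate}); the non-negativity of the integrand likewise follows from Chapman--Kolmogorov and Cauchy--Schwarz. With those remarks added, your argument is correct and arguably cleaner than the paper's.
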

\begin{bem}
Note that $\gamma\delta<1$ is required in this Proposition. But this is no restriction since it is equivalent to \[\max_{i=1,...,N}\frac{\log\mu_i}{\log(\mu_ir_i)}<1\] and this is fulfilled since it holds for $0<\mu_i,r_i<1$
\begin{align*}
\log(\mu_ir_i) < \log(\mu_i)<0.
\end{align*} 
\end{bem}
\begin{proof}
First, we consider $v_1$. Since $(t,y)\to p^b_{t}(x,y)$ is continuous on $(0,T]\times[0,1]$ for $x\in[0,1]$ and $g$ and $v_0$ are predictable, the integrand is predictable. In order to prove the spatial estimate for $v_1$, let $t\in[0,T],x,y\in[0,1]$ be fixed. Then, there exists a constant $C_q$ such that
\begin{align}
\mathbb{E}&\left(|v_1(t,x)-v_1(t,y)|^q\right)\notag\\&=
\mathbb{E}\left(\left|\int_0^t\int_0^1\left(p_{t-s}^b(x,z)-p_{t-s}^b(y,z)\right)g(s,v_0(s,z))\xi(s,z)d\mu(z)ds \right|^q\right)\notag\\
&\leq C_q\left(\mathbb{E}\left(\left|\int_0^t\int_0^1\left(p_{t-s}^b(x,z)-p_{t-s}^b(y,z)\right)^2g(s,v_0(s,z))^2d\mu(z)ds \right|^{\frac{q}{2}}\right)\right)^{\frac{2}{q}\frac{q}{2}}\label{v1_heat_spatial1}\\
&\leq C_q\left|\int_0^t\int_0^1\left|\left(p_{t-s}^b(x,z)-p_{t-s}^b(y,z)\right)^q\mathbb{E}\left(g(s,v_0(s,z))^q\right)\right|^{\frac{2}{q}}d\mu(z)ds \right|^{\frac{q}{2}}\label{v1_heat_spatial2}\\
&= C_q\left|\int_0^t\int_0^1\left(p_{t-s}^b(x,z)-p_{t-s}^b(y,z)\right)^2\left|\mathbb{E}\left(g(s,v_0(s,z))^q\right)\right|^{\frac{2}{q}}d\mu(z)ds \right|^{\frac{q}{2}}\notag\\
&\leq  2^{q-1}C_q\left(\lVert M\rVert_{q,T}^q+L^q\lVert v_0\rVert_{q,T}^q\right)\left|\int_0^t\int_0^1\left(p_{t-s}^b(x,z)-p_{t-s}^b(y,z)\right)^2d\mu(z)ds \right|^{\frac{q}{2}},\label{v1_heat_spatial3}
\end{align}
where we have used the Burkholder-Davis-Gundy inequality (see e.g. \cite[Theorem B.1]{KAS}) \eqref{v1_heat_spatial1}, Minkowski's integral inequality in \eqref{v1_heat_spatial2} and the relation 
\begin{align}
\mathbb{E}\left(\left|g(s,v_0(s,y))\right|^q\right)\leq \mathbb{E}\left( |M(s)|+L|u(s,y)|\right)^q\leq 2^{q-1}\left(\mathbb{E}\left(\left|M(s)\right|^q\right)+L^q\mathbb{E}\left(\left|v_0(s,y)\right|^q\right)\right).\label{g_heat_estimate}
\end{align}
in \eqref{v1_heat_spatial3}.
We proceed by estimating the integral term in \eqref{v1_heat_spatial3}, whereby we first treat the case $x,y\in F$. By Lemma \ref{estimate_resolvent_lemma_2}, Lemma \ref{estimate_resolvent_lemma}, and Lemma \ref{resolvent_density_lipschitz},
\begin{align*}
&\left|\int_0^t\int_0^1\left(p_{t-s}^b(x,z)-p_{t-s}^b(y,z)\right)^2d\mu(z)ds\right| \\
&=
\lim_{n\to\infty} \left|\int_0^t\int_0^1\left(\left\langle p_{t-s}^b(\cdot,z),f_n^x-f_n^y\right\rangle_{\mu}\right)^2d\mu(z)ds\right| \\
&\leq \frac{e^{2t}}{2}\lim_{n\to\infty} \left|\int_0^t\int_0^1\rho_1(z_1,z_2)(f_n^x(z_1)-f_n^y(z_1))(f_n^x(z_2)-f_n^y(z_2)) d\mu(z_1)d\mu(z_2)\right|\\
&=\frac{e^{2t}}{2}\left|\rho_1(x,x)-2\rho_1(x,y)+\rho_1(y,y)\right|\\
&\leq e^{2t}L_1|x-y|.
\end{align*}
Now, let $x,y\in F^c$ such that there exists an $i\in\mathbb{N}$ with $(x,y)\in(a_i,b_i)$ and we assume $x<y$. Then, since $b_i,a_i\in F$, the previous calculation implies
\begin{align*}
&\left|\int_0^t\int_0^1\left(p_{t-s}^b(x,z)-p_{t-s}^b(y,z)\right)^2d\mu(z)ds\right|\\
&\leq \int_0^t\sum_{k=1}^{\infty}e^{-2\lambda_k(t-s)}\left(\varphi_k(x)-\varphi_k(y)\right)^2ds\\
&\leq \int_0^t \left(\frac{x-y}{b_i-a_i}\right)^2\sum_{k=1}^{\infty}e^{-2\lambda_k(t-s)}\left(\varphi_k(b_i)-\varphi_k(a_i)\right)^2 ds\\
&\leq TL_1e^{2t}\left(\frac{x-y}{b_i-a_i}\right)^2|b_i-a_i|\\
&\leq TL_1e^{2t}\frac{(x-y)^2}{b_i-a_i}\\
&\leq TL_1e^{2t}|x-y|.
\end{align*}
The remaining cases for $x,y\in[0,1]$ follow by using the triangle inequality for the norm \linebreak $L^2([0,T]\times~\![0,1],\lambda^1\times~\!\mu)$. Consequently, for all $(x,y)\in[0,1]$
\begin{align*}
&\left|\int_0^t\int_0^1\left(p_{t-s}^b(x,z)-p_{t-s}^b(y,z)\right)^2d\mu(z)ds\right|^{\frac{1}{2}}\leq 3^{\frac{1}{2}} T^{\frac{1}{2}}e^{-\frac{1}{2}}L_1^{\frac{1}{2}}e^{t}|x-y|^{\frac{1}{2}}.
\end{align*}
We conclude
\begin{align*}
\mathbb{E}&\left(|v_1(t,x)-v_1(t,y)|^q\right)\leq
 3^{\frac{q}{2}}2^{q-1}e^{\frac{t}{q}}C_qL_1^{\frac{q}{2}}\left(\lVert M\rVert_{q,T}^q+L^q\lVert v_0\rVert_{q,T}^q\right)|x-y|^{\frac{q}{2}}.
\end{align*}

This proves the spacial estimate since the last integral is finite. We now turn to the temporal esimate. Let $t,s\in[0,T]$ with $s<t$ and $x\in[0,1]$. Then, by using the Burkholder-Davis-Gundy inequality, Minkowski's integral inequality and inequality \eqref{g_heat_estimate}, we get
\begin{align}
&\mathbb{E}\left(|v_1(t,x)-v_1(s,x)|^q\right)\notag\\
&\leq C_q\left|\int_0^t\int_0^1\left|\left(p_{t-u}^b(x,y)-p^b_{s-u}(x,y)\mathds{1}_{[0,s]}(u)\right)^2\mathbb{E}\left(g(s,v_0(s,y))^q\right)\right|^{\frac{2}{q}}d\mu(y)du \right|^{\frac{q}{2}}\notag\\
&\leq  2^{q-1}C_q\left(\lVert M\rVert_{q,T}^q+L^q\lVert v_0\rVert_{q,T}^q\right)\left|\int_0^t\int_0^1\left(p_{t-u}^b(x,y)-p^b_{s-u}(x,y)\mathds{1}_{[0,s]}(u)\right)^2d\mu(y)du \right|^{\frac{q}{2}}.\notag
\end{align}
We split the above integral in the time intervals $[0,s]$ and $(s,t]$ and get for the first part
\begin{align}
\int_0^s\int_0^1\left(p_{t-u}^b(x,y)-p^b_{s-u}(x,y)\right)^2d\mu(y)du
&=\int_0^s\int_0^1\left(p_{u}^b(x,y)-p^b_{u+t-s}(x,y)\right)^2d\mu(y)du\notag \\
\begin{split}
&=\int_0^s\int_0^1p_{2u}^b(x,x)-2p^b_{2u+t-s}(x,x)\\&~~~~~~~~~~~~+p_{2(u+t-s)}^b(x,x)d\mu(y)du\notag 
\end{split}
\\
&=2^{-\gamma}C_7(2T)\int_0^s u^{-\gamma\delta}-(u+t-s)^{-\gamma\delta}du\label{v_1_temporal_heat_1} \\
&=\frac{2^{-\gamma}}{1-\gamma\delta} C_7(2T) \left(s^{1-\gamma\delta}-t^{1-\gamma\delta}+(t-s)^{1-\gamma\delta}\right)\notag \\
&\leq\frac{2^{-\gamma\delta}}{1-\gamma\delta} C_7(2T) (t-s)^{1-\gamma\delta},\notag 
\end{align}
where we have used Proposition \ref{heat_kernel_estimates}\ref{heat_kernel_estimate_3} in \eqref{v_1_temporal_heat_1}.  For the second part by using Proposition  \ref{heat_kernel_estimates}\ref{heat_kernel_estimate_1}
\begin{align*}
\int_s^t\int_0^1p_{t-u}^b(x,y)^2d\mu(y)du\notag 
&= \int_0^{t-s}p_{2s}^b(x,x)du\notag \\
&\leq 2^{-\gamma}C_5(2T)\int_0^{t-s} u^{-\gamma\delta}du\\
&\leq \frac{2^{-\frac{\gamma}{4}}}{1-\gamma} C_5(2T)(t-s)^{1-\gamma\delta}.
\end{align*}
For the estimates for $v_2$ use Jensen's inequality instead of the Burkholder-Davis-Gundy inequality and the rest of the proof works similarly. 
\end{proof}
\begin{kor}\label{Sqt_korollar_heat}
Let $q\geq 2$ and $v_0\in\mathcal{S}_{q,T}$. Then, $v_1$ and $v_2$ defined as in \eqref{heat_1}-\eqref{heat_2} are elements of $\mathcal{S}_{q,T}$.
\end{kor}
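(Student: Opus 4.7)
The plan is to check the two defining conditions of $\mathcal{S}_{q,T}$ separately: predictability and finiteness of the $\lVert\cdot\rVert_{q,T}$-norm. Predictability is immediate from the setup: the integrand $(s,y)\mapsto p_{t-s}^b(x,y)f(s,v_0(s,y))$ is predictable as a product of a jointly continuous deterministic factor (Proposition \ref{heat_kernel_satz}\ref{heat_kernel_1}) and the predictable process $f(s,v_0(s,\cdot))$, so $v_2$ is predictable as a Lebesgue integral, and similarly Walsh's stochastic integral of a predictable integrand yields a predictable version of $v_1$. The substantive task is therefore the uniform $q$-th moment bound.

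For $v_1(t,x)$, I would replay the first three steps of the proof of Proposition \ref{stochastic_continuity_heat} with no difference taken: Burkholder--Davis--Gundy converts the $q$-th moment into a moment of the quadratic variation, Minkowski's integral inequality pulls the expectation inside, and the linear growth of $g$ combined with $v_0\in\mathcal{S}_{q,T}$ (via \eqref{g_heat_estimate}) gives
\[\mathbb{E}|v_1(t,x)|^q \leq 2^{q-1}C_q\bigl(\lVert M\rVert_{q,T}^q+L^q\lVert v_0\rVert_{q,T}^q\bigr)\left(\int_0^t\int_0^1 p_{t-s}^b(x,y)^2\,d\mu(y)\,ds\right)^{q/2}.\]
It then remains to control the deterministic integral uniformly in $(t,x)$. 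By the Chapman--Kolmogorov identity (Proposition \ref{heat_kernel_satz}\ref{heat_kernel_3}) and the on-diagonal bound (Proposition \ref{heat_kernel_estimates}\ref{heat_kernel_estimate_1}),
\[\int_0^1 p_{t-s}^b(x,y)^2\,d\mu(y) = p_{2(t-s)}^b(x,x) \leq C_5(2T)\bigl(2(t-s)\bigr)^{-\gamma\delta};\]
since $\gamma\delta<1$ (as observed in the remark preceding this corollary) this is integrable on $[0,t]\subseteq[0,T]$ with a $T$-dependent, $x$-independent bound, hence $\lVert v_1\rVert_{q,T}<\infty$.

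For $v_2$ the argument is simpler because no stochastic integral is present. The linear growth of $f$ combined with the sub-probability estimate $\int_0^1 p_{t-s}^b(x,y)\,d\mu(y)\leq 1$ (Proposition \ref{heat_kernel_satz}\ref{heat_kernel_7}) gives pointwise
\[|v_2(t,x)| \leq \int_0^t M(s)\,ds + L\int_0^t\int_0^1 p_{t-s}^b(x,y)|v_0(s,y)|\,d\mu(y)\,ds,\]
and applying Minkowski's inequality in $L^q(\Omega)$ to the right-hand side yields
\[\lVert v_2(t,x)\rVert_{L^q(\Omega)} \leq T\sup_{s\in[0,T]}\lVert M(s)\rVert_{L^q(\Omega)} + LT\lVert v_0\rVert_{q,T},\]
uniformly in $(t,x)\in[0,T]\times[0,1]$. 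I do not anticipate a genuine obstacle, since all analytical ingredients are already established; the only quantitative input is the time-integrability of the on-diagonal heat kernel, which rests on the inequality $\gamma\delta<1$ verified in the remark just above.
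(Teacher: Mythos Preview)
Your moment bounds are correct and follow the same route as the paper, which simply sets $s=0$ in the temporal estimate of Proposition~\ref{stochastic_continuity_heat} (using $v_i(0,\cdot)=0$) rather than redoing the Burkholder--Davis--Gundy\,/\,Minkowski computation; either way the key analytic input is $\int_0^t p_{2(t-s)}^b(x,x)\,ds\le C$ via $\gamma\delta<1$.

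The gap is predictability of $v_1$. For each fixed $(t,x)$, Walsh's construction yields an $\mathcal{F}_t$-measurable random variable, but it does \emph{not} automatically provide a version of the random field $(\omega,t,x)\mapsto v_1(\omega,t,x)$ that is $\mathcal{P}_{[0,T],[0,1]}$-measurable: the integrand $p_{t-s}^b(x,y)g(s,v_0(s,y))$ depends on the external parameters $(t,x)$, and one has to argue that the uncountable family of stochastic integrals glues together into a predictable process. Your sentence ``Walsh's stochastic integral of a predictable integrand yields a predictable version of $v_1$'' is precisely the nontrivial content the paper devotes most of the proof to. The paper's argument is to discretise: set
\[
v_1^n(t,x)=\sum_{i,j=0}^{2^n-1}v_1\!\left(\tfrac{iT}{2^n},\tfrac{j}{2^n}\right)\mathds{1}_{\left(\frac{iT}{2^n},\frac{(i+1)T}{2^n}\right]}(t)\,\mathds{1}_{\left(\frac{j}{2^n},\frac{j+1}{2^n}\right]}(x),
\]
show each $v_1^n$ is an $\mathcal{S}_{q,T}$-limit of simple functions (truncate $v_1(\frac{iT}{2^n},\frac{j}{2^n})$ at level $N$ and let $N\to\infty$), and then use the stochastic continuity estimates of Proposition~\ref{stochastic_continuity_heat} to obtain $\lVert v_1-v_1^n\rVert_{q,T}\to 0$. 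Predictability is thus inherited from the approximants. If you want to bypass this, you must either cite a general result to the effect that parameter-dependent Walsh integrals admit jointly measurable (predictable) versions, or---equivalently---appeal to the $L^q$-continuity you already have and argue that an adapted, stochastically continuous process has a predictable modification; but that is exactly the paper's argument in disguise, not something ``immediate''.
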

\begin{proof}
By setting $s=0$ in Proposition \ref{stochastic_continuity_heat} we obtain $\left\lVert v_i\right\rVert_{q,T}<\infty$, $i=1,2$. We need to show that $v_1$ is predictable. 
For $n\in\mathbb{N}$ let
\begin{align*}
v_1^n(t,x)=\sum_{i,j=0}^{2^n-1}v_1\left(\frac{i}{2^n}T,\frac{j}{2^n}\right)\mathds{1}_{\left(\frac{i}{2^n}T,\frac{i+1}{2^n}T\right]}(t)\mathds{1}_{\left(\frac{j}{2^n},\frac{j+1}{2^n}\right]}(x), ~~(t,x)\in[0,T]\times[0,1].
\end{align*}
It holds evidently $\left\lVert v_1^n\right\rVert_{q,T}<\infty$.  To prove that $v_1^n$ is predictable, we show that $v_1^n$ is the $\mathcal{S}_{q,T}$-limit of a sequence of simple functions. To this end, let for $N\geq 1$
\begin{align*}
v_1^{n,N}(t,x)=v_1^n(t,x)\wedge N, ~~ t\in[0,T],~ x\in[0,1].
\end{align*}
This defines a simple function since $v_1\left(\frac{i}{2^n}T,\frac{j}{2^n}\right)\wedge N$ is $\mathcal{F}_{\frac{iT}{2^n}}$-measurable and bounded. It converges in $\mathcal{S}_{q,T}$ to $v_1^n$, which can be seen as follows:
\begin{align*}
&\lim_{N\to\infty}\sup_{t\in[0,T]}\sup_{x\in[0,1]}\left\lVert v_1^n(t,x)-v_1^{n,N}(t,x)\right\rVert_{L^q(\Omega)}\\
&\leq  \lim_{N\to\infty}\sup_{t\in[0,T]}\sup_{x\in[0,1]}\sum_{i,j=0}^{2^n-1}\left\lVert v_1\left(\frac{i}{2^n}T,\frac{j}{2^n}\right)- v_1\left(\frac{i}{2^n}T,\frac{j}{2^n}\right)\wedge N\right\rVert_{L^q(\Omega)}\\
&= \lim_{N\to\infty}\sum_{i,j=0}^{2^n-1}\left\lVert v_1\left(\frac{i}{2^n}T,\frac{j}{2^n}\right)- v_1\left(\frac{i}{2^n}T,\frac{j}{2^n}\right)\wedge N\right\rVert_{L^q(\Omega)}
\\ &=0,
\end{align*}
where the last equation follows from the monotone convergence theorem. We conclude that $v_1^n$ is  predictable for $n\in\mathbb{N}$ .
By Proposition  \ref{stochastic_continuity_heat},
\begin{align*}
\left\lVert v_1-v_1^n\right\rVert_{q,T}&\leq \sup_{|s-t|<\frac{T}{n}}\sup_{|x-y|<\frac{1}{n}}\left\lVert v_1(s,x)-v_1(t,y)\right\rVert_{L^q(\Omega)}\\
&\leq  \sup_{|s-t|<\frac{T}{n}}\sup_{|x-y|<\frac{1}{n}}\left\lVert v_1(s,x)-v_1(t,x)\right\rVert_{L^q(\Omega)}\\
&~~~+\sup_{|s-t|<\frac{T}{n}}\sup_{|x-y|<\frac{1}{n}}\left\lVert v_1(t,x)-v_1(t,y)\right\rVert_{L^q(\Omega)}\\
&\leq C_6\left(\left(\frac{T}{n}\right)^{\frac{1}{2}-\frac{\gamma\delta}{2}}+\left(\frac{1}{n}\right)^{\frac{1}{2}}\right)\to 0, ~n\to\infty.
\end{align*}
Hence, $v_1$ is predictable. The predictability of $v_2$ follows analogously.
\end{proof}
After these preparations we can follow the methods of \cite[Theorem 6.9]{HYC} in order to establish existence and uniqueness.
\begin{thm}
Assume Condition \ref{hypo_heat} with $q\geq 2$.  Then, SPDE \eqref{spde_heat} has a unique mild solution in $\mathcal{S}_{q,T}$.
\end{thm}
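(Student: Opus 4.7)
The plan is a standard Picard iteration in the Banach space $\mathcal{S}_{q,T}$. Define the map $\Phi:\mathcal{S}_{q,T}\to\mathcal{S}_{q,T}$ by
\begin{align*}
\Phi(v)(t,x) &\coloneqq \int_0^1 p_t^b(x,y)u_0(y)\,d\mu(y) + \int_0^t\!\!\int_0^1 p_{t-s}^b(x,y)f(s,v(s,y))\,d\mu(y)\,ds \\
&\quad+ \int_0^t\!\!\int_0^1 p_{t-s}^b(x,y)g(s,v(s,y))\,\xi(s,y)\,d\mu(y)\,ds,
\end{align*}
so that a mild solution to \eqref{spde_heat} is exactly a fixed point of $\Phi$.

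First I would verify that $\Phi$ maps $\mathcal{S}_{q,T}$ into itself. Hypothesis \ref{hypo_heat}\ref{hypo_heat_ii} ensures that for $v\in\mathcal{S}_{q,T}$ the composed processes $f(\cdot,v(\cdot,\cdot))$ and $g(\cdot,v(\cdot,\cdot))$ are predictable and satisfy the required linear growth bound, so Corollary \ref{Sqt_korollar_heat} already places the two integral summands in $\mathcal{S}_{q,T}$. The initial-value summand is handled by Jensen's inequality applied to the (sub-)probability measure $p_t^b(x,y)\,d\mu(y)$ supplied by Proposition \ref{heat_kernel_satz}\ref{heat_kernel_7}, combined with $u_0\in\mathcal{S}_q$; predictability is immediate from the continuity statement in Proposition \ref{heat_kernel_satz}\ref{heat_kernel_1} and the $\mathcal{F}_0$-measurability of $u_0$.

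The core step is a contraction-type estimate. For $u,v\in\mathcal{S}_{q,T}$, Burkholder-Davis-Gundy for the noise part, Minkowski's integral inequality for the drift part, and the Lipschitz property of $f,g$ combine with Proposition \ref{heat_kernel_satz}\ref{heat_kernel_3} to yield
\[
\bigl(\mathbb{E}|\Phi(u)(t,x)-\Phi(v)(t,x)|^q\bigr)^{2/q} \leq C\int_0^t M(s)^{2/q}\,p_{2(t-s)}^b(x,x)\,ds,
\]
where $M(s)\coloneqq \sup_{y\in[0,1]}\mathbb{E}|u(s,y)-v(s,y)|^q$, plus a strictly dominated drift term handled analogously via Proposition \ref{heat_kernel_satz}\ref{heat_kernel_7}. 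By Proposition \ref{heat_kernel_estimates}\ref{heat_kernel_estimate_1} the right-hand side is further bounded by $C\int_0^t M(s)^{2/q}(t-s)^{-\gamma\delta}\,ds$, and this Volterra-type integral is well-defined precisely because $\gamma\delta<1$, as established in the remark following Proposition \ref{stochastic_continuity_heat}.

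To conclude, I would iterate this bound. A direct induction, exploiting the beta-function identity $\int_u^t(t-s)^{-\gamma\delta}(s-u)^{-\gamma\delta}\,ds = B(1-\gamma\delta,1-\gamma\delta)(t-u)^{1-2\gamma\delta}$ and its $n$-fold analogues, gives an estimate of the schematic form
\[
\|\Phi^n(u)-\Phi^n(v)\|_{q,T}^{q} \leq \frac{(C\,T^{1-\gamma\delta})^{n}}{\Gamma\bigl(n(1-\gamma\delta)+1\bigr)^{q/2}}\,\|u-v\|_{q,T}^{q},
\]
which tends to $0$ as $n\to\infty$. Hence some iterate $\Phi^n$ is a strict contraction on $\mathcal{S}_{q,T}$, and applying the Banach fixed-point theorem to $\Phi^n$ yields a unique fixed point of $\Phi$, i.e.\ a unique mild solution. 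The main obstacle I anticipate is bookkeeping the $2/q$-exponents produced by Burkholder-Davis-Gundy and Minkowski when iterating: these prevent $\Phi$ itself from being a contraction in a single step and force the Volterra convolution argument, but once the correct gamma-function denominator is extracted the conclusion follows.
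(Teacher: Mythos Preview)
Your proposal is correct and follows essentially the same route as the paper: both arguments reduce the problem to the Volterra-type inequality $H(t)\leq C\int_0^t H(s)(t-s)^{-\gamma\delta}\,ds$ via Burkholder--Davis--Gundy, Minkowski, the Lipschitz condition, and the heat-kernel bound of Proposition~\ref{heat_kernel_estimates}\ref{heat_kernel_estimate_1}, and then iterate this convolution to force convergence. The only cosmetic differences are that the paper treats uniqueness and existence separately and invokes \cite[Lemma~3.3]{WI} as a black box for the iteration, whereas you package both into a single Banach fixed-point argument for $\Phi^n$ and write out the gamma-function decay explicitly.
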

\begin{proof}
\textit{Uniqueness:} Let $u,\widetilde u\in\mathcal{S}_{q,T}$ be mild solutions to \eqref{spde_heat}. Then $v\coloneqq u-\widetilde u\in\mathcal{S}_{2,T}$.
With $G(t)\coloneqq \sup_{x\in[0,1]}\mathbb{E}\left[v^2(t,x)\right]$
we calculate for $(t,x)\in[0,T]\times[0,1]$
\begin{align}
\mathbb{E}\left[v(t,x)^2\right]
&\leq 2T\mathbb{E}\left[\int_0^t\int_0^1\left(p_{t-s}^b(x,y)\right)^2\left(f(s,u(s,y))-f\left(s,\widetilde u(s,y)\right)\right)^2d\mu(y)ds\right]\notag\\
&~~+2\mathbb{E}\left[\int_0^t\int_0^1\left(p_{t-s}^b(x,y)\right)^2\left(g(s,u(s,y))-g\left(s,\widetilde u(s,y)\right)\right)^2d\mu(y)ds\right]\label{uniqueness_heat_0}\\
&\leq 2(T+1)L^2\mathbb{E}\left[\int_0^t\int_0^1 v^2(s,y)\left(p_{t-s}^b(x,y)\right)^2d\mu(y)ds\right]\notag\\
&\leq 2(T+1)L^2\int_0^tG(s)\int_0^1\left(p_{t-s}^b(x,y)\right)^2d\mu(y)ds\notag\\
&=2(T+1)L^2\int_0^tG(s)p_{2(t-s)}^b(x,x)ds\notag\\
&\leq 2^{1-\gamma\delta}(T+1)C_5(2T) L^2\int_0^tG(s)(t-s)^{-\gamma\delta} ds\label{uniqueness_heat_1},
\end{align}
where we have used Walsh's isometry and Hölder's inequality
in \eqref{uniqueness_heat_0} and Proposition \ref{heat_kernel_estimates}\ref{heat_kernel_estimate_1} in \eqref{uniqueness_heat_1}. It follows 
\begin{align*}
G(t)&\leq 2^{1-\gamma\delta}(T+1)C_5(2T) L^2\int_0^tG(s)(t-s)^{-\gamma\delta} ds
\end{align*}
and by setting $h_n=G$ in \cite[Lemma 3.3]{WI} we obtain that $G(t)=0$ for $t\in[0,T]$.
We conclude $u(t,x)=\widetilde u(t,x)$ almost surely for every $(t,x)\in[0,T]\times[0,1]$.\par
\textit{Existence:} As usual, we use Picard iteration to find a solution. For that, let $u_1=0\in\mathcal{S}_{q,T}$ and for $n\geq 1$
\begin{align}
\begin{split}
u_{n+1}(t,x)=\int_0^1&p_t^b(x,y)u_0(y)d\mu(y)+\int_0^t\int_0^1p_{t-s}^b(x,y)f(s,u_n(s,y))d\mu(y)ds\\&+\int_0^t\int_0^1p_{t-s}^b(x,y)g(s,u_n(s,y))\xi(s,y)d\mu(y)ds. \label{picard_equ_heat}
\end{split}
\end{align}
Let $n\geq 1$, assume that $u_n\in\mathcal{S}_{q,T}$ and define $u_{n+1}$ as in \eqref{picard_equ_heat}. The last two terms on the right-hand side are elements of $\mathcal{S}_{q,T}$ by Proposition \ref{Sqt_korollar_heat}. The first term is predictable because it is $\mathcal{F}_0$-measurable and thus adapted and almost surely continuous due to the dominated convergence theorem and Proposition \ref{heat_kernel_estimates}\ref{heat_kernel_estimate_1}. Furthermore, by Minkowski's integral inequality
\begin{align*}
\mathbb{E}\left[\left|\int_0^1p_t^b(x,y)u_0(y)d\mu(y)\right|^q\right]&\leq\left(\int_0^1p_t^b(x,y)\mathbb{E}\left[|u_0(y)|^q\right]^{\frac{1}{q}}d\mu(y)\right)^q\\
&\leq \left\lVert u_0\right\rVert_q^q\left|\int_0^1p_t^b(x,y)d\mu(y)\right|^q\\
&\leq \left\lVert u_0\right\rVert_q^q.
\end{align*}
In the last inequality we have used the Markov property of $T_t$ and the continuity of $\int_0^1p_t^b(\cdot,y)d\mu(y)$ by dominated convergence and Proposition \ref{heat_kernel_estimates}\ref{heat_kernel_estimate_1} to get with $h=1$
\begin{align}
\int_0^1p_t^b(x,y)d\mu(y)=\left|T_t^bh(x)\right|\leq 1, ~~ x\in[0,1] \label{leq1}
\end{align}
for $t\in[0,T]$. It follows that $u_{n+1}\in \mathcal{S}_{q,T}$.\par
 We prove that $(u_n)_{n\in\mathbb{N}}$ is a Cauchy sequence in $\mathcal{S}_{q,T}$. Let $v_n=u_{n+1}-u_n\in\mathcal{S}_{q,T}$. By using Hölder's and the Burkholder-Davis-Gundy inequality, the Lipschitz property of $f$ and $g$ as well as Minkowski's integral inequality we get 
\begin{align*}
&\mathbb{E}\left[v_{n+1}(t,x)^q\right]\\
&\leq 2^{q-1}T^{\frac{q}{2}} \mathbb{E}\left[\left|\int_0^t\int_0^1p_{t-s}^b(x,y)^2\left(f(s,u_{n+1}(s,y))-f\left(s,u_n(s,y)\right)\right)^2d\mu(y)ds\right|^{\frac{q}{2}}\right]\\
&~~+ 2^{q-1}C_q\mathbb{E}\left[\left|\int_0^t\int_0^1p_{t-s}^b(x,y)^2\left(g(s,u_{n+1}(s,y))-g\left(s,u_n(s,y)\right)\right)^2d\mu(y)ds\right|^{\frac{q}{2}}\right]\\
&\leq 2^{q-1}\left(T^{\frac{q}{2}}+C_q\right) L^q\mathbb{E}\left[\left|\int_0^t\int_0^1 p_{t-s}^b(x,y)^2v_n^2(s,y)d\mu(y) ds\right|^{\frac{q}{2}}\right]\\
&\leq  2^{q-1}\left(T^{\frac{q}{2}}+C_q\right)\left(\int_0^t\int_0^1 p_{t-s}^b(x,y)^2\left(\mathbb{E}\left[|v_n(s,y)|^q\right]\right)^{\frac{2}{q}}	d\mu(y)ds\right)^{\frac{q}{2}}.
\end{align*}
Set  $H_n(t)=\sup_{x\in[0,1]}\left(\mathbb{E}\left[|v_n(t,y)|^q\right]\right)^{\frac{2}{q}}$ for $n\geq 1,~ t\in[0,T]$. Then for every $n\geq 2$ there exists a constant $c_n$ such that $|H_n(t)|\leq c_n$ for every $t\in[0,T]$. With Proposition \ref{heat_kernel_estimates}\ref{heat_kernel_estimate_1} it follows that there exists $C>0$ such that for $(t,x)\in[0,T]\times[0,1]$ and $n\in\mathbb{N}$ 
\begin{align*}
&\left(\mathbb{E}\left[v_{n+1}(t,x)^q\right]\right)^{\frac{2}{q}}\leq C\int_0^t H_n(s)p_{2(t-s)}^b(x,x)d\mu(y)ds\\
&\leq 2^{-\gamma\delta}C_5(2T)C\int_0^t H_n(s)(t-s)^{-\gamma\delta}ds
\end{align*}
and thus for $t\in[0,T]$ and $n\in\mathbb{N}$
\begin{align*}
H_{n+1}(t)\leq  2^{-\gamma\delta}C_5(2T)C\int_0^t H_n(s)(t-s)^{-\gamma\delta}ds.
\end{align*}
From \cite[Lemma 3.3]{WI} it follows that there exists a constant $C'$ and a $k\in\mathbb{N}$ such that for $n,m\geq 1$, $t\in[0,T]$
\begin{align*}
H_{n+mk}(t)\leq\frac{C'^m}{(m-1)!}\int_0^t H_n(s)(t-s)ds.
\end{align*}
Therefore $\sum_{m\geq 1}\sqrt{H_{n+mk}}$ converges uniformly in $[0,T]$, which can be verified by the ratio test using that  $\sqrt{\frac{H_{n+(m+1)k}(t)}{H_{n+mk}(t)}}\leq \sqrt{\frac{C'}{m}}$ for $n\geq 1$.
 We conclude  \[\sup_{t\in[0,T]}\sqrt{H_n(t)}\to 0,~ n\to\infty,\] 
which implies the same for $\left\lVert v_n\right\rVert_{q,T}$. Hence, $(u_n)_{n\geq 1}$ is Cauchy in $\mathcal{S}_{q,T}$ with limit denoted by $u$. To verify that $u$ satisfies \eqref{mild_solution_heat} we take the limit in $L^q(\Omega)$ for $n\to\infty$ on both sides of \eqref{picard_equ_heat} for every $(t,x)\in[0,T]\times[0,1]$. We obtain $u(t,x)$ on the left-hand side for any $(t,x)\in[0,T]\times[0,1]$. For the right-hand side we note that there exists $C''>0$ such that for $(t,x)\in[0,T]\times[0,1]$  
\begin{align*}
&\mathbb{E}\left[\left|\int_0^t\int_0^1p_{t-s}^b(x,y)\left(f(s,u(s,y))-f\left(s,u_n(s,y)\right)\right)\xi(s,y)d\mu(y)ds\right|^{q}\right]\\
~~&+\mathbb{E}\left[\left|\int_0^t\int_0^1p_{t-s}^b(x,y)\left(g(s,u(s,y))-g\left(s,u_n(s,y)\right)\right)d\mu(y)ds\right|^{q}\right]\\
&\leq C''\left(\int_0^t\int_0^1 p_{t-s}^b(x,y)^2\left(\mathbb{E}\left[|u(s,y)-u_n(s,y)|^q\right]\right)^{\frac{2}{q}}	d\mu(y)ds\right)^{\frac{q}{2}},
\end{align*}
which goes to zero as $n$ tends to infinity with the same argumentation as before.
\end{proof}

Preliminary for the formulation of the main result of this section, we define $u^{\sto}\in \mathcal{S}_{q,T}$ by 
\begin{align*}
u^{\sto}(t,x)=&\int_0^t\int_0^1p_{t-s}^b(x,y)f(s,u(s,y))d\mu(y)ds
+\int_0^t\int_0^1p_{t-s}^b(x,y)g(s,u(s,y))\xi(s,y)d\mu(y)ds
\end{align*}
almost surely for $(t,x)\in[0,T]\times[0,1]$.
We consider this process because the regularity of a version of $u$ is, in general, restricted by the regularity of  $u-u^{\sto}$. 
Furthermore, we introduce the normed product space $([0,T]\times[0,1],||\cdot||_{2})$, where $\lVert\cdot\rVert_{2}$ is the Euclidian norm.
\begin{thm}\label{main_result_heat}
Assume Condition \ref{hypo_heat} with $q\geq 2$. Then, there exists a version of $u^{\sto}$, denoted by $\tilde  u^{\sto}$, such that the following holds:
\begin{enumerate}[label=(\roman*)]
\item \label{main_result_heat_2} If $q>2$ and $t\in[0,T]$,  $\tilde u^{\sto}(t,\cdot)$ is a.s. essentially $\frac{1}{2}-\frac{1}{q}$-Hölder continuous on $[0,1]$.
\item \label{main_result_heat_3} If $q>\left(\frac{1}{2}-\frac{\gamma\delta}{2}\right)^{-1}$ and $x\in[0,1]$, $\tilde u^{\sto}(\cdot,x)$ is a.s. essentially $\frac{1}{2}-\frac{\gamma\delta}{2}-\frac{1}{q}$-Hölder continuous on $[0,T]$.
\item \label{main_result_heat_1}  If $q>4\vee \left(2\left(\frac{1}{2}-\frac{\gamma\delta}{2}\right)^{-1}\right)$, $\tilde u^{\sto}$ is a.s. essentially $\left(\frac{1}{2}-\frac{\gamma\delta}{2}-\frac{2}{q}\right)\wedge \left(\frac{1}{2}-\frac{2}{q}\right)$-Hölder continuous on $[0,T]\times[0,1]$.
\end{enumerate}
\end{thm}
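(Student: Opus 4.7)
The plan is a direct application of Kolmogorov's continuity theorem to the moment bounds of Proposition \ref{stochastic_continuity_heat}. Since the existence theorem above places the mild solution $u$ in $\mathcal{S}_{q,T}$, we may set $v_0 = u$ in that proposition and, using $(a+b)^q \le 2^{q-1}(a^q+b^q)$, combine the bounds for $v_1$ and $v_2$ to obtain a constant $C>0$ (depending on $T$, $L$, $\lVert M\rVert_{q,T}$, $\lVert u\rVert_{q,T}$) such that
\begin{align*}
\mathbb{E}\left|u^{\sto}(t,x)-u^{\sto}(t,y)\right|^q &\le C|x-y|^{q/2},\\
\mathbb{E}\left|u^{\sto}(s,x)-u^{\sto}(t,x)\right|^q &\le C|s-t|^{q(1/2-\gamma\delta/2)}
\end{align*}
for all $s,t \in [0,T]$ and $x,y \in [0,1]$.

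For (i), fix $t \in [0,T]$ and apply the one-dimensional Kolmogorov-Chentsov continuity theorem to the spatial estimate. The hypothesis $q>2$ makes the exponent $q/2$ strictly exceed the parameter dimension $1$, yielding a modification of $u^{\sto}(t,\cdot)$ that is a.s.\ essentially $(q/2-1)/q = 1/2-1/q$-Hölder on $[0,1]$. Part (ii) is analogous: the condition $q>(1/2-\gamma\delta/2)^{-1}$ makes $q(1/2-\gamma\delta/2)>1$, so a continuous modification of $u^{\sto}(\cdot,x)$ exists which is essentially $(1/2-\gamma\delta/2-1/q)$-Hölder on $[0,T]$.

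For (iii), combining the two single-variable bounds via the triangle inequality gives, for $z_1 = (t,x)$ and $z_2 = (s,y)$,
\begin{align*}
\mathbb{E}\left|u^{\sto}(z_1)-u^{\sto}(z_2)\right|^q \le 2^{q-1}C\bigl(|t-s|^{q(1/2-\gamma\delta/2)}+|x-y|^{q/2}\bigr).
\end{align*}
Since $|t-s|,\,|x-y| \le \lVert z_1-z_2\rVert_2$ and since $\gamma\delta>0$ renders $1/2-\gamma\delta/2$ the smaller of the two exponents, the right-hand side is dominated on the bounded domain $[0,T]\times[0,1]$ by a constant multiple of $\lVert z_1-z_2\rVert_2^{q(1/2-\gamma\delta/2)}$. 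The two-dimensional Kolmogorov-Chentsov theorem now requires $q(1/2-\gamma\delta/2)>2$, i.e.\ $q>2(1/2-\gamma\delta/2)^{-1}$, and yields a jointly continuous modification whose essential Hölder exponent equals $(q(1/2-\gamma\delta/2)-2)/q = 1/2-\gamma\delta/2-2/q$, which agrees with the asserted minimum because $\gamma\delta>0$; the additional hypothesis $q>4$ only ensures that $1/2-2/q>0$. A single modification $\tilde u^{\sto}$ can be chosen for all three parts, because continuous modifications are unique up to indistinguishability.

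Proposition \ref{stochastic_continuity_heat} already performed the substantive analytic work, so no real obstacle remains. The only mild subtlety is the anisotropic-to-isotropic reduction in (iii), handled by bounding the one-variable distances by the Euclidean norm and using that the smaller exponent dominates on a bounded domain.
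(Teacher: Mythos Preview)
Your proof is correct and follows essentially the same route as the paper: apply Proposition~\ref{stochastic_continuity_heat} with $v_0=u$ and then invoke the one- and two-dimensional Kolmogorov--Chentsov theorems. The only minor variation is in part~\ref{main_result_heat_1}, where the paper reduces to the Euclidean norm via the superadditivity inequality $a^m+b^m\le(a+b)^m$ for $m\ge 1$ (this is where the paper actively uses $q/4\ge 1$ and $\tfrac{q}{2}(\tfrac12-\tfrac{\gamma\delta}{2})\ge 1$), whereas you use the equivalent and slightly slicker bounded-domain argument; both are standard and yield the same conclusion.
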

\begin{proof}
The continuity properties in part \ref{main_result_heat_2} and \ref{main_result_heat_3} of a version of $u^{\sto}$ follow immediately from Proposition \ref{stochastic_continuity_heat} and Kolmogorov's continuity theorem. 
Further, for $(t,x)\in[0,T]\times[0,1]$ by using Proposition \ref{stochastic_continuity_heat} with $v_0=u\in\mathcal{S}_{q,T}$,
\begin{align*}
\mathbb{E}\left[\left(\tilde u^{\sto}(s,x)-\tilde u^{\sto}(t,y)\right)^q\right]&\leq 
2^{q-1}\left(\mathbb{E}\left[\left(\tilde u^{\sto}(s,x)-\tilde u^{\sto}(t,x)\right)^q\right]
+\mathbb{E}\left[\left(\tilde u^{\sto}(t,x)-\tilde u^{\sto}(t,y)\right)^q\right]\right)
\\
&\leq  2^qC_8\left(1+\lVert u_0\rVert_{q,T}^q\right)\left(|x-y|^{\frac{q}{2}}+|s-t|^{q\left(\frac{1}{2}-\frac{\gamma\delta}{2}\right)}\right)\\
&\leq  2^qC_8\left(1+\lVert u_0\rVert_{q,T}^q\right)\left(\left(|x-y|^2\right)^{\frac{q}{4}}+\left(|s-t|^2\right)^{\frac{q}{2}\left(\frac{1}{2}-\frac{\gamma\delta}{2}\right)}\right)\\
&\leq 2^qC_8 T^{q\left(\frac{1}{2}-\frac{\gamma\delta}{2}\right)}\left(1+\lVert u_0\rVert_{q,T}^q\right)\Bigg(\left(|x-y|^2\right)^{\frac{q}{4}}\\&~~~~~~~~~~~~~~~~~~~~~~~~~~~~~~~~~~~~~~~~~~~~+\left(\left|\frac{s}{T}-\frac{t}{T}\right|^2\right)^{\frac{q}{2}\left(\frac{1}{2}-\frac{\gamma\delta}{2}\right)}\Bigg)\\
&\leq 2^qC_8 T^{q\left(\frac{1}{2}-\frac{\gamma\delta}{2}\right)}\left(1+\lVert u_0\rVert_{q,T}^q\right) \left\lVert \left(x-y,s-t\right)\right\rVert_{2}^{\frac{q}{2}\wedge \left(q\left(\frac{1}{2}-\frac{\gamma\delta}{2}\right)\right)}.
\end{align*}
In the last inequality we have used that $|x-y|\leq 1$ and $\left|\frac{s}{T}-\frac{t}{T}\right|\leq 1$ as well as $\frac{q}{4}\geq 1$ and $\frac{q}{2}\left(\frac{1}{2}-\frac{\gamma\delta}{2}\right)\geq 1$ due to the assumption of part \ref{main_result_heat_1}. The result follows from Kolmogorov's continuity theorem in two dimensions (see, e.g., \cite[Remark 21.7]{KW}).

\end{proof}

\begin{figure}[t]
\centering
\includegraphics[scale=0.6]{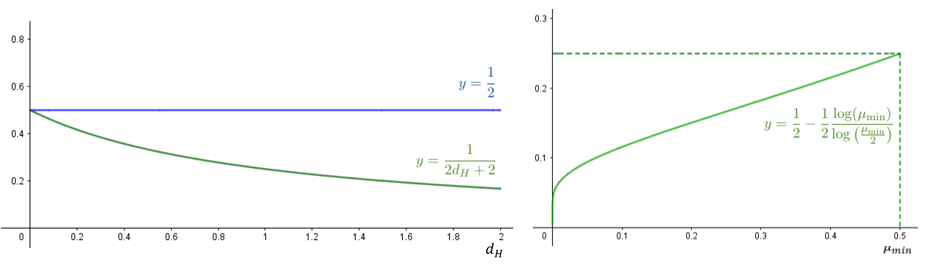}
\caption{Hölder exponent graphs (see Example \ref{hoelder_exa})}
\label{hoelder_plot}
\end{figure} 

\begin{exa}\label{hoelder_exa}
\begin{enumerate}[label=(\roman*)]
\item If $u_0, f$ and $g$ satisfy Assumption \ref{hypo_heat} and are uniformly bounded, $q$ can be chosen arbitrarily large such that we obtain $\frac{1}{2}$ as ess. spatial and $\frac{1}{2}-\frac{\gamma\delta}{2}$ as ess. temporal Hölder exponent. If, in addition, the measure $\mu$ is chosen as the Hausdorff measure on a given Cantor-like set with Hausdorff dimension $d_H$, then
\begin{align*}
\frac{1}{2}-\frac{\gamma\delta}{2}=\frac{1}{2}-\frac{1}{2}\max_{1\leq i\leq n}\frac{\log \left(r_i^{d_H}\right)}{\log \left(r_i^{d_H+1}\right)}=\frac{1}{2}-\frac{d_H}{2d_H+2}=\frac{1}{2d_H+2}.
\end{align*}
Under these conditions, wet get the same terms in the p.c.f. fractal case with $1\leq d_H <2$ (compare \cite[Theorem 6.14]{HYC}). These terms are visualized on the left-hand side of Figure \ref{hoelder_plot} for $0<d_H<2$. 
\item If $\mu$ is not the Hausdorff measure on a given Cantor-like set, then
\begin{align*}
\frac{1}{2}-\frac{\gamma\delta}{2}<\frac{1}{2d_H+2}.
\end{align*}
As an example, consider the weighted IFS given by $S_1(x)=0.5x,~ S_2(x)=0.5x+0.5$ and weights $\mu_1,\mu_2\in(0,1)$. It follows
\begin{align*}
\frac{1}{2}-\frac{\gamma\delta}{2}= \frac{1}{2}-\frac{1}{2}\max_{i=1,2}\frac{\log(\mu_i)}{\log(\mu_ir_i)}=\frac{1}{2}-\frac{1}{2}\frac{\log(\mu_{\min})}{\log\left(\frac{\mu_{\min}}{2}\right)},
\end{align*} 
which goes to zero as $\mu_{\min}$ tends to zero. This is visualized on the right-hand side of  Figure \ref{hoelder_plot}.
\end{enumerate}
\end{exa}
\subsection{Intermittency}
According to \cite{KA} we call the mild solution $u$ weakly intermittent on $[0,1]$ if the lower and the upper moment Lyapunov exponents which are respectively the functions $\gamma$ and $\bar \gamma$ defined by
\begin{align*}
\gamma(p,x)\coloneqq \liminf_{t\to\infty}\frac{1}{t}\log\mathbb{E}\left[u(t,x)^p\right], ~~ 
\bar\gamma(p,x)\coloneqq \limsup_{t\to\infty}\frac{1}{t}\log\mathbb{E}\left[u(t,x)^p\right], ~~ p\in(0,\infty), x\in[0,1]
\end{align*}
satisfy
\begin{align*}
\gamma(2,x)>0,~~~ \bar\gamma(p,x)<\infty, ~~ ~p\in [2,\infty), x\in[0,1].
\end{align*}

In this section we make the following additional assumption:
\begin{hyp}
We assume that Condition \ref{hypo_heat} holds for some $q\geq 2$. Moreover, we assume that $u_0$ is non-negative and that $f$ and $g$ satisfy the following Lipschitz and linear growth condition: For all $(w,t,x)\in\Omega\times[0,\infty)\times\mathbb{R}$ there exists a constant $L>0$ such that
\begin{align*}
|f(\omega,t,x)-f(\omega,t,y)|+|g(\omega,t,x)-g(\omega,t,y)|&\leq L|x-y|,\\
|f(\omega,t,y)|+|g(\omega,t,y)|&\leq L(1+|x|).
\end{align*}
\end{hyp}

\begin{thm}\label{intermittency_upper_estimate_heat}
There exists $C_{10}>0$ such that for all $p\in[1,q]$, $(t,x)\in[0,\infty)\times[0,1]$
\begin{align*}
\left(\mathbb{E}\left[\left|u(t,x)\right|^p\right]\right)^{\frac{1}{p}}\leq \left(2\left\lVert u_0\right\rVert_q+1\right)e^{C_{10}p^{\frac{1}{1-\gamma}}t}.
\end{align*}
\end{thm}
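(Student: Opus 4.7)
Set $N_p(t) \coloneqq \sup_{x \in [0,1]} \bigl(\mathbb{E}[|u(t,x)|^p]\bigr)^{1/p}$ and $H_p(t) \coloneqq 1 + N_p(t)$. The plan is to derive a Volterra-type integral inequality for $H_p(t)^2$ and close it by a standard renewal argument, extracting exponential growth of order $p^{1/(1-\gamma\delta)}$. I would first treat $p \geq 2$ and recover the range $p \in [1,2)$ at the end from Jensen's inequality.

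Starting from the mild formula, Minkowski's integral inequality together with the sub-Markov estimate $\int_0^1 p_t^b(x,y)\,d\mu(y) \leq 1$ from Proposition \ref{heat_kernel_satz}\ref{heat_kernel_7} bounds the initial-condition contribution by $\|u_0\|_q$ and the drift contribution by $L\int_0^t H_p(s)\,ds$. For the stochastic term, the Burkholder-Davis-Gundy inequality (whose constant is of order $\sqrt{p}$) followed by Minkowski's integral inequality reduces matters to
\[
\int_0^t \int_0^1 p_{t-s}^b(x,y)^2 H_p(s)^2 \, d\mu(y)\,ds.
\]
The $y$-integration collapses to $p_{2(t-s)}^b(x,x)$ by Proposition \ref{heat_kernel_satz}\ref{heat_kernel_3}, which by Proposition \ref{heat_kernel_estimates}\ref{heat_kernel_estimate_1} is bounded by $C(t-s)^{-\gamma\delta}$. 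Squaring and collecting the estimates yields
\[
H_p(t)^2 \leq C_1\bigl(1 + \|u_0\|_q\bigr)^2 + C_2\,p \int_0^t \bigl((t-s)^{-\gamma\delta} + 1\bigr)\,H_p(s)^2\,ds.
\]

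To close this inequality I would apply the standard renewal trick: multiply by $e^{-\beta t}$, set $M_\beta \coloneqq \sup_{s \in [0,t]} e^{-\beta s} H_p(s)^2$, and use
\[
\int_0^\infty e^{-\beta u}\bigl(u^{-\gamma\delta}+1\bigr)\,du = \Gamma(1-\gamma\delta)\beta^{\gamma\delta-1} + \beta^{-1}
\]
to obtain $M_\beta \leq C_1(1+\|u_0\|_q)^2 + C_2\,p\bigl(\Gamma(1-\gamma\delta)\beta^{\gamma\delta-1}+\beta^{-1}\bigr)\,M_\beta$. Choosing $\beta \coloneqq C_3\,p^{1/(1-\gamma\delta)}$ with $C_3$ large enough drives the coefficient of $M_\beta$ below $\tfrac{1}{2}$, so $H_p(t)^2 \leq 2C_1(1+\|u_0\|_q)^2 e^{\beta t}$. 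Taking square roots and absorbing constants gives the claimed bound (the exponent in the statement, written $p^{1/(1-\gamma)}$, ought to be $p^{1/(1-\gamma\delta)}$, since $\gamma\delta$ is the exponent governing the heat-kernel decay of Proposition \ref{heat_kernel_estimates}\ref{heat_kernel_estimate_1}).

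The main obstacle I anticipate is the exponent bookkeeping in the renewal step, in particular verifying that the non-singular contribution $\int_0^t H_p(s)\,ds$ from the drift is absorbed into the Volterra kernel without degrading the $p^{1/(1-\gamma\delta)}$ scaling, so that the choice of $\beta$ is dictated by the singular $u^{-\gamma\delta}$ piece alone. Once this calibration is done the argument proceeds along the Foondun-Khoshnevisan template adapted to the present fractal-Laplacian setting.
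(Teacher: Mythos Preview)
Your overall strategy---exponential weighting, BDG with constant $\sqrt p$, collapsing the $y$-integral to $p_{2(t-s)}^b(x,x)$, choosing $\beta\sim p^{1/(1-\gamma\delta)}$, and recovering $p\in[1,2)$ via Jensen---is exactly the paper's. Two points, however, need repair.

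First, the citation of Proposition~\ref{heat_kernel_estimates}\ref{heat_kernel_estimate_1} is not enough: that bound reads $p_t^b(x,y)\le C_5(T)t^{-\gamma\delta}$ with a constant depending on the time horizon $T$, so it cannot be used uniformly on $[0,\infty)$. The paper isolates this as a separate lemma (Lemma~\ref{heat_kernel_estimate_intermittency}), proving $p_t^b(x,x)\le C_{11}(1+t^{-\gamma\delta})$ for all $t>0$ from the short-time estimate together with the uniform convergence $p_t^b(x,x)\to\mathds 1_{\{b=N\}}$ as $t\to\infty$. This is also the honest source of the ``$+1$'' in your Volterra kernel.

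Second, the step ``squaring and collecting'' does not produce the inequality you wrote: squaring the drift contribution gives $L^2\bigl(\int_0^t H_p(s)\,ds\bigr)^2$, not $\int_0^t H_p(s)^2\,ds$, and the Cauchy--Schwarz fix introduces a factor $t$ that prevents you from closing the renewal inequality uniformly in $t$. The paper sidesteps this by never squaring: it applies the weight $e^{-\alpha t}$ directly to $\bigl(\mathbb E|u(t,x)|^p\bigr)^{1/p}$, so the drift term contributes simply $L\!\int_0^t e^{-\alpha(t-s)}\,ds\le L/\alpha$ and the stochastic term contributes $C\sqrt p\,\bigl(\int_0^\infty e^{-\alpha s}(1+s^{-\gamma\delta})\,ds\bigr)^{1/2}\le C'\sqrt p\,\alpha^{(\gamma\delta-1)/2}$. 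Both are driven below $\tfrac12$ by the choice $\alpha=C\,p^{1/(1-\gamma\delta)}$. If you adopt this linear (rather than quadratic) weighting your argument goes through; your observation about $\gamma$ versus $\gamma\delta$ in the exponent is correct.
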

\begin{bem}
If the weights are chosen as $\mu_i=r_i^{d_H}, i=1,..,N$, it holds $\gamma=\frac{d_H}{d_H+1}$ and therefore $\frac{1}{1-\gamma}=d_H+1$. Then, the above inequality reads as 
\[
\left(\mathbb{E}\left[\left|u(t,x)\right|^p\right]\right)^{\frac{1}{p}}\leq \left(2\left\lVert u_0\right\rVert_q+1\right)e^{C_{10}p^{d_H+1}t},
\] which holds in the same way for p.c.f. self-similar sets (compare \cite[Theorem 7.5]{HYC}).
\end{bem}
Preparing the proof of Proposition \ref{intermittency_upper_estimate_heat} we give an estimate on $p_t^b(x,x)$ for $x\in[0,1]$ which holds for all $t\geq 0$ instead of only on a finite time interval as in Proposition \ref{heat_kernel_estimates}\ref{heat_kernel_estimate_1}.
\begin{lem}\label{heat_kernel_estimate_intermittency}
Let $b\in\{N,D\}$. There exists $C_{11}>0$ such that for all $(t,x)\in[0,\infty)\times[0,1]$
\begin{align*}
p_t^b(x,x)\leq C_{11}\left(1+t^{-\gamma\delta}\right).
\end{align*}
\end{lem}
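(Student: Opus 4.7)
The plan is to split the argument at $t=1$ and patch together the short-time estimate from Proposition \ref{heat_kernel_estimates}\ref{heat_kernel_estimate_1} with a monotonicity argument on the spectral side for large times.

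For $t\in(0,1]$, the result is immediate from Proposition \ref{heat_kernel_estimates}\ref{heat_kernel_estimate_1} applied with $T=1$, which gives $p_t^b(x,x)\leq C_5(1)\, t^{-\gamma\delta}\leq C_5(1)(1+t^{-\gamma\delta})$ uniformly in $x\in[0,1]$. So all the work is in the regime $t\geq 1$, where I need a bound that does \emph{not} blow up with $t$ (the issue the lemma was designed to address, since $C_5(T)$ from Proposition \ref{heat_kernel_estimates}\ref{heat_kernel_estimate_1} is not known to be uniform in $T$).

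The key observation for $t\geq 1$ is that every eigenvalue $\lambda_k^b$ is non-negative (strictly positive for Dirichlet, and $\lambda_1^N=0$ with $\lambda_k^N\geq 0$ for $k\geq 2$ in the Neumann case, cf.\ \cite[Proposition 6.3]{FA}). Therefore the map $t\mapsto e^{-\lambda_k^b t}$ is non-increasing for each $k$, so termwise
\[
e^{-\lambda_k^b t}\varphi_k^b(x)^2 \;\leq\; e^{-\lambda_k^b}\varphi_k^b(x)^2, \qquad t\geq 1.
\]
Summing over $k$ (the series converges absolutely thanks to the eigenvalue lower bound \eqref{eigenwertabsch} and the eigenfunction bound \eqref{eigenfunction_estimate}) yields
\[
p_t^b(x,x)\;=\;\sum_{k=1}^{\infty}e^{-\lambda_k^b t}\varphi_k^b(x)^2 \;\leq\;\sum_{k=1}^{\infty}e^{-\lambda_k^b}\varphi_k^b(x)^2\;=\;p_1^b(x,x)\;\leq\;C_5(1),
\]
where the last inequality is again Proposition \ref{heat_kernel_estimates}\ref{heat_kernel_estimate_1} with $T=1$. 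In particular $p_t^b(x,x)\leq C_5(1)(1+t^{-\gamma\delta})$ on $[1,\infty)$ as well.

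Combining the two regimes, the constant $C_{11}\coloneqq C_5(1)$ works uniformly in $(t,x)\in(0,\infty)\times[0,1]$. There is no substantial obstacle here beyond recognising that, since all eigenvalues are non-negative, the diagonal heat kernel $t\mapsto p_t^b(x,x)$ is actually non-increasing, so the bound established at $t=1$ already controls all later times; the short-time singularity is absorbed by the $t^{-\gamma\delta}$ term that the statement explicitly allows.
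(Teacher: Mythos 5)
Your proof is correct. For $t\in(0,1]$ the bound is indeed just Proposition \ref{heat_kernel_estimates}\ref{heat_kernel_estimate_1} with $T=1$, and for $t\geq 1$ your observation that the diagonal terms $e^{-\lambda_k^b t}\varphi_k^b(x)^2$ are non-negative and non-increasing in $t$ (all $\lambda_k^b\geq 0$) legitimately gives $p_t^b(x,x)\leq p_1^b(x,x)\leq C_5(1)$, the series converging absolutely by \eqref{eigenwertabsch} and \eqref{eigenfunction_estimate}; at $t=0$ the claimed inequality is vacuous anyway. The paper argues slightly differently: it controls the large-time regime by dominating the series by $\sum_{k}e^{-C_0k^{1/\gamma}t}C_2^2k^{\delta}$, which converges uniformly on $[T,\infty)$, computes the limits $p_t^N(x,x)\to 1$ and $p_t^D(x,x)\to 0$ as $t\to\infty$ by dominated convergence, and then invokes Proposition \ref{heat_kernel_estimates}\ref{heat_kernel_estimate_3} for the short-time singularity. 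The two routes are close in spirit, since the monotonicity you exploit is exactly what underlies part \ref{heat_kernel_estimate_3} (its proof shows $\frac{\partial}{\partial t}p_t^b(x,x)=-2\mE\left(p^b_{\frac{t}{2},x},p^b_{\frac{t}{2},x}\right)\leq 0$), but yours is more economical: it needs neither the uniform-convergence/limit computation nor part \ref{heat_kernel_estimate_3}, and it yields the explicit constant $C_{11}=C_5(1)$. What the paper's version buys in exchange is the identification of the precise limits at infinity, which is the kind of information reused later (e.g.\ $p_t^N(x,x)\geq 1$ in the intermittency lower bound), whereas your argument only delivers boundedness.
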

\begin{proof}
Let $T>0$. By \eqref{eigenwertabsch} and Theorem \eqref{eigenfunction_estimate} it holds for each $x\in[0,1]$
\begin{align*}
\left|p_t^b(x,x)\right|\leq \sum_{k\geq 1}e^{-C_0 k^{\frac{1}{\gamma}}t}C_2^2k^{\delta},
\end{align*}
which converges uniformly on $[T,\infty)$ (see \cite[Lemma 5.1.4]{KA}). Further, note that $\lambda_1^N=0, \varphi_1^N\equiv 1, \varphi_1^D>0$.
The dominated convergence theorem gives us
\begin{align*}
\lim_{t\to\infty} p_t^N(x,x)=1,~ \lim_{t\to\infty} p_t^D(x,x)=0
\end{align*}
uniformly for all $x\in[0,1]$. This along with Proposition \ref{heat_kernel_estimates}\ref{heat_kernel_estimate_3} implies the result.
\end{proof}
\begin{proof}[Proof of Theorem \ref{intermittency_upper_estimate_heat}]
We follow the methods of \cite[Theorem 7.5]{HYC}. Let $p\in[2,q], ~\alpha>0,~ (t,x)\in[0,\infty)\times[0,1]$. By using Minkowski's integral inequality and the Burkholder-Davis-Gundy inequality
\begin{align*}
e^{-\alpha t}\left(\mathbb{E}\left[\left|u(t,x)\right|^p\right]\right)^{\frac{1}{p}}
&\leq \left\lVert u_0\right\rVert_p+\left(\mathbb{E}\left[\left|\int_0^t\int_0^1 e^{-\alpha t}p_{t-s}^b(x,y)f(s,u(s,y))d\mu(y) ds\right|^p\right]\right)^{\frac{1}{p}}\\
&~~+2\sqrt{p}\left(\mathbb{E}\left[\left|\int_0^t\int_0^1 e^{-2\alpha t}p_{t-s}^b(x,y)^2g(s,u(s,y))^2 ds\right|^{\frac{p}{2}}\right]\right)^{\frac{1}{p}}.
\end{align*}
To estimate the first integral we use Minkowski's integral inequality and obtain
\begin{align*}
&\left(\mathbb{E}\left[\left|\int_0^t\int_0^1 e^{-\alpha t}p_{t-s}^b(x,y)f(s,u(s,y))d\mu(y) ds\right|^p\right]\right)^{\frac{1}{p}}\\
&~~ \leq \int_0^t\int_0^1 e^{-\alpha t}p_{t-s}^b(x,y)\left(\mathbb{E}\left[\left|f(s,u(s,y))\right|^p\right]\right)^{\frac{1}{p}} d\mu(y) ds\\
&~~ \leq L\int_0^t\int_0^1 e^{-\alpha t}p_{t-s}^b(x,y)\left(1+\left(\mathbb{E}\left[\left|f(s,u(s,y))\right|^p\right]\right)^{\frac{1}{p}}\right) d\mu(y) ds\\
&~~ \leq L\left(te^{-\alpha t} +\int_0^t\int_0^1 e^{-\alpha t}p_{t-s}^b(x,y)\sup_{z\in[0,1]}\left(\left(\mathbb{E}\left[\left|f(s,u(s,z))\right|^p\right]\right)^{\frac{1}{p}}\right) d\mu(y) ds\right)\\
&~~ \leq L\left(\frac{1}{\alpha}+\sup_{(s,z)\in[0,t]\times[0,1]}\left(\left(e^{-\alpha s} \mathbb{E}\left[\left|f(s,u(s,z))\right|^p\right]\right)^{\frac{1}{p}}\right)\int_0^t e^{-\alpha (t-s)} ds\right)\\
&~~\leq \frac{L}{\alpha}\left(1+\sup_{(s,z)\in[0,t]\times[0,1]}\left(\left(e^{-\alpha s} \mathbb{E}\left[\left|f(s,u(s,z))\right|^p\right]\right)^{\frac{1}{p}}\right)\right).
\end{align*}
In the second last inequality we have used that $t\to te^{-\alpha t}$ has its maximum at $t=\frac{1}{\alpha}$ which can be seen by differentiating. We turn to the second integral. By using Lemma \ref{heat_kernel_estimate_intermittency}

\begin{align*}
&\left(\mathbb{E}\left[\left|\int_0^t\int_0^1 e^{-2\alpha t}p_{t-s}^b(x,y)^2g(s,u(s,y))^2d\mu(y) ds\right|^{\frac{p}{2}}\right]\right)^{\frac{1}{p}}~~~~~~~~~~~~~~~~~~~\\
&~~ \leq \left(\int_0^t\int_0^1 e^{-2\alpha t}p_{t-s}^b(x,y)^2\left(\mathbb{E}\left[\left|g(s,u(s,y))\right|^{p}\right]\right)^{\frac{2}{p}} d\mu(y) ds\right)^{\frac{1}{2}}~~~~~~~~~~~~ ~~~~~~~
\end{align*}
\begin{align*}
&~~ \leq L\left(\int_0^t e^{-2\alpha t}p_{2(t-s)}^b(x,x)\sup_{z\in[0,1]}\left(1+\left(\mathbb{E}\left[\left|u(s,z)\right|^p\right]\right)^{\frac{1}{p}}\right)^2 ds\right)^{\frac{1}{2}}\\
&~~ = L\left(\int_0^t e^{-2\alpha (t-s)}p_{2(t-s)}^b(x,x)\sup_{z\in[0,1]}\left(e^{-\alpha s}+e^{-\alpha s}\left(\mathbb{E}\left[\left|u(s,z)\right|^p\right]\right)^{\frac{1}{p}}\right)^2 ds\right)^{\frac{1}{2}}\\
&~~ \leq L\sup_{(s,z)\in[0,t]\times[0,1]}\left(e^{-\alpha s}+e^{-\alpha s}\left(\mathbb{E}\left[\left|u(s,z)\right|^p\right]\right)^{\frac{1}{p}}\right)\left(\int_0^t e^{-2\alpha (t-s)}p_{2(t-s)}^b(x,x) ds\right)^{\frac{1}{2}}\\
&~~ \leq 2^{-\frac{1}{2}}L \sup_{(s,z)\in[0,t]\times[0,1]}\left(1+e^{-\alpha s}\left(\mathbb{E}\left[\left|u(s,z)\right|^p\right]\right)^{\frac{1}{p}}\right)\left(\int_0^{\infty} e^{-\alpha s}p_{s}^b(x,x) ds\right)^{\frac{1}{2}}\\
&~~ \leq 2^{-\frac{1}{2}}LC_{12}^{\frac{1}{2}} \left(\int_0^{\infty} e^{-\alpha s}(1+s^{-\gamma\delta}) ds\right)^{\frac{1}{2}}\sup_{(s,z)\in[0,t]\times[0,1]}\left(1+e^{-\alpha s}\left(\mathbb{E}\left[\left|u(s,z)\right|^p\right]\right)^{\frac{1}{p}}\right).
\end{align*}
We denote the gamma function by $\Gamma$ and calculate
\begin{align*}
\int_0^{\infty} e^{-\alpha s}(1+s^{-\gamma\delta}) ds&=\int_0^{\infty} e^{-\alpha s}ds +\int_0^{\infty}e^{-\alpha s}s^{-\gamma\delta}ds\\&=\frac{1}{\alpha}+\frac{1}{\alpha}\int_0^{\infty}e^{-s}\left(\frac{s}{\alpha}\right)^{-\gamma\delta}ds\\&=\frac{1+\alpha^{\gamma\delta}\Gamma(1-\gamma\delta)}{\alpha}.
\end{align*}
This implies that a constant $C_{10}'>0$ exists such that for all $\alpha>0$
\begin{align*}
\int_0^{\infty} e^{-\alpha s}(1+s^{-\gamma}) ds\leq C_{10}'\alpha^{\gamma\delta-1}.
\end{align*}
Using these esimates we get
\begin{align*}
e^{-\alpha t}\left(\mathbb{E}\left[\left|u(t,x)\right|^p\right]\right)^{\frac{1}{p}}
&\leq \left\lVert u_0\right\rVert_p\\&~+\left(\frac{L}{\alpha}+L\sqrt{2C_{12}C_{10}'\alpha^{\gamma\delta-1}p}\right)\sup_{(s,z)\in[0,t]\times[0,1]}\left(1+e^{-\alpha s}\left(\mathbb{E}\left[\left|u(s,z)\right|^p\right]\right)^{\frac{1}{p}}\right).
\end{align*}
Now, let $\alpha= C_{10}''p^{\frac{1}{1-\gamma\delta}} $, where $C_{10}''$ is a positive constant which does not depend on $p$.  Then, 
\begin{align*}
\frac{L}{\alpha}+L\sqrt{2C_{12}C_{10}'\alpha^{\gamma\delta-1}p}&=\frac{L}{C_{10}''p^{\frac{1}{1-\gamma\delta}}}+L\sqrt{2C_{11}C_{10}'}C_{10}''^{\frac{\gamma\delta}{2}-\frac{1}{2}}. 
\end{align*}
The right-hand side decreases as $C_{10}''$ increases, therefore we can choose a $C_{10}''>0$ such that 
\begin{align*}
\frac{L}{\alpha}+L\sqrt{2C_{11}C_{10}'\alpha^{\gamma\delta-1}p}\leq \frac{1}{2}.
\end{align*}
This leads to
\begin{align*}
\sup_{(s,z)\in[0,t]\times[0,1]}\left(\mathbb{E}\left[\left|u(t,x)\right|^p\right]\right)^{\frac{1}{p}}\leq \left( 2\left\lVert u_0\right\rVert_p+1\right)e^{C_{10}''p^{\frac{1}{1-\gamma\delta}} t}.
\end{align*}
We obtain for all $p\in[2,q], (t,x)\in[0,\infty)\times[0,1]$
\begin{align*}
\left(\mathbb{E}\left[\left|u(t,x)\right|^p\right]\right)^{\frac{1}{p}}\leq  \left(2\left\lVert u_0\right\rVert_q+1\right)e^{C_{10}''p^{\frac{1}{1-\gamma\delta}} t}.
\end{align*}
For $1\leq p<2$ we have for $(t,x)\in[0,\infty)\times[0,1]$
\begin{align*}
\left(\mathbb{E}\left[\left|u(t,x)\right|^p\right]\right)^{\frac{1}{p}}&\leq \left(\mathbb{E}\left[\left|u(t,x)\right|^2\right]\right)^{\frac{1}{2}}\\ &\leq
 \left(2\left\lVert u_0\right\rVert_q+1\right)e^{C_{10}''2^{\frac{1}{1-\gamma\delta}} t}\\
 &\leq
   \left(2\left\lVert u_0\right\rVert_q+1\right)e^{C_{10}''\left(\frac{2}{p}\right)^{\frac{1}{1-\gamma\delta}}p^{\frac{1}{1-\gamma\delta}} t}
\end{align*}
and obtain the assertion for all $p\in[1,q]$ by setting $C_{10}=C_{10}''2^{\frac{1}{1-\gamma\delta}}.$
\end{proof}
From the above proposition, it follows immediately for $p\in[1,q]$
\begin{align*}
\bar\gamma(p)=\limsup_{t\to\infty}\frac{1}{t}\sup_{x\in[0,1]}\log\mathbb{E}\left[|u(t,x)|^p\right]&\leq \limsup_{t\to\infty}\frac{p}{t}\log C_{11}+C_{10}p^{1+\frac{1}{1-\gamma\delta}}=C_{10}p^{1+\frac{1}{1-\gamma\delta}}.
\end{align*}
\begin{bem}
Although we have defined the mild solution only on $[0,T]$ for any $T\geq 0$, the map $t\to\mathbb{E}|u(t,x)|^p$ is continuous on $[0,\infty)$ for every $x\in[0,1]$ and every $p\geq 1$. For $t\in[0,\infty)$ choose $T=2t\wedge 1$ and use the stochastic continuity properties (see Proposition \ref{stochastic_continuity_heat}) to verify this. 
\end{bem}
In order to work out a lower bound for the second moment, we restrict ourselves to the following SPDE for $t\in[0,\infty):$
\begin{align}
\begin{split}
\frac{\partial}{\partial t}u(t,x)&=\Delta_{\mu}^Nu(t,x)+g(u(t,x))\xi(t,x),\label{spde_heat_intermittency}\\
u(0,x)&=u_0(x).
\end{split}
\end{align}
Furthermore, let the following conditions hold.

\begin{hyp}\label{hypo_heat_only_g} 
\begin{enumerate}[label=(\roman*)]
\item $u_0:[0,1]\to\mathbb{R}$ is measurable, non-negative and bounded.
\item $g:\mathbb{R}\to\mathbb{R}$ satisfies the following Lipschitz and linear growth conditions: There exists $L>0$ such that for all $(x,y)\in\mathbb{R}^2$
\begin{align*}
|g(x)-g(y)|&\leq L|x-y|,\\ |g(x)|&\leq +L(1+|x|).
\end{align*} 
\end{enumerate}
\end{hyp}

\begin{satz}\label{intermittency_heat_lower} 
Define $L_g\coloneqq \inf_{x\in[0,1]}\left|g(x)/x\right|>0.$
Let  $\inf_{x\in[0,1]}u_0(x)>0$. Then, $\gamma(2,x)\geq L_g^2$ for all $x\in[0,1]$. 
\end{satz}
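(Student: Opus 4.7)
The plan is to derive a scalar Gronwall-type lower bound for $M(t)\coloneqq\inf_{x\in[0,1]}\mathbb{E}[u(t,x)^2]$ that forces exponential growth at rate at least $L_g^2$.

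First, set $I_0\coloneqq\inf_{x\in[0,1]}u_0(x)>0$. Splitting the mild solution into its deterministic part and its stochastic integral and applying Walsh's isometry (the cross term vanishes because the stochastic integral has mean zero) gives
\begin{align*}
\mathbb{E}\bigl[u(t,x)^2\bigr]=\left(\int_0^1 p_t^N(x,y)u_0(y)d\mu(y)\right)^2+\int_0^t\!\int_0^1 p_{t-s}^N(x,y)^2\,\mathbb{E}\bigl[g(u(s,y))^2\bigr]d\mu(y)ds.
\end{align*}
By Proposition \ref{heat_kernel_satz}\ref{heat_kernel_7} and $u_0\geq I_0$, the first summand is at least $I_0^2$. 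The definition of $L_g$ yields $g(u)^2\geq L_g^2 u^2$, hence $\mathbb{E}[g(u(s,y))^2]\geq L_g^2\mathbb{E}[u(s,y)^2]$.

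The crucial auxiliary identity, obtained from the symmetry of the kernel and Proposition \ref{heat_kernel_satz}\ref{heat_kernel_3}, is
\begin{align*}
\int_0^1 p_{t-s}^N(x,y)^2d\mu(y)=p_{2(t-s)}^N(x,x).
\end{align*}
In the Neumann case one has $\lambda_1^N=0$ with $L^2(\mu)$-normalized eigenfunction $\varphi_1^N\equiv 1$ (since $\mu$ is a probability measure), so
\begin{align*}
p_t^N(x,x)=\sum_{k\geq 1}e^{-\lambda_k^N t}\bigl(\varphi_k^N(x)\bigr)^2\geq \bigl(\varphi_1^N(x)\bigr)^2=1\qquad \text{for all }(t,x)\in[0,\infty)\times[0,1].
\end{align*}
Bounding $\mathbb{E}[u(s,y)^2]\geq M(s)$ under the inner integral and using this uniform lower bound $p_{2(t-s)}^N(x,x)\geq 1$, then taking the infimum over $x$, produces the scalar inequality
\begin{align*}
M(t)\geq I_0^2+L_g^2\int_0^t M(s)\,ds.
\end{align*}

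Gronwall's inequality in integral form (which also yields lower bounds when the constant and the coefficient are positive, via integration of $(e^{-L_g^2 t}\int_0^t M)'\geq I_0^2 e^{-L_g^2 t}$) then gives $M(t)\geq I_0^2 e^{L_g^2 t}$, so that $\mathbb{E}[u(t,x)^2]\geq I_0^2 e^{L_g^2 t}$ for all $x\in[0,1]$ and hence $\gamma(2,x)\geq L_g^2$.

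The main subtlety is the uniform lower bound $p_t^N(x,x)\geq 1$, which fails in the Dirichlet case and explains the restriction to Neumann boundary conditions. If one prefers not to worry about measurability of $M(t)$, the argument can equivalently be phrased as a Picard iteration: define $M_0(t,x)\coloneqq I_0^2$ and $M_{n+1}(t,x)\coloneqq I_0^2+L_g^2\int_0^t\!\int_0^1 p_{t-s}^N(x,y)^2 M_n(s,y)d\mu(y)ds$, then verify by induction both $M_n(t,x)\leq \mathbb{E}[u(t,x)^2]$ and $M_n(t,x)\geq I_0^2\sum_{k=0}^n (L_g^2 t)^k/k!$, and pass to the limit $n\to\infty$.
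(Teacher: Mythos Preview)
Your proof is correct and essentially identical to the paper's: both split the mild solution via Walsh's isometry, bound the deterministic part below by $I_0^2$ using $\int p_t^N(x,\cdot)\,d\mu=1$, use $g(u)^2\geq L_g^2 u^2$, reduce the kernel integral to $p_{2(t-s)}^N(x,x)\geq 1$ via the Neumann ground state, and arrive at the scalar inequality $M(t)\geq I_0^2+L_g^2\int_0^t M(s)\,ds$. The only cosmetic difference is that the paper iterates this inequality by hand to obtain the exponential series, whereas you invoke Gronwall directly (and then mention the iteration as an alternative); these are the same argument.
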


\begin{proof}
From the non-negativity of $u_0$ and Proposition \ref{heat_kernel_satz}\ref{heat_kernel_7}  it follows for $x\in[0,1]$
\begin{align*}
\int_0^1 p_t^N(x,y)u_0(y)d\mu(y)\geq\inf_{y\in[0,1]}u_0(y)\int_0^1 p_t^N(x,y)d\mu(y)=\inf_{y\in[0,1]}u_0(y).
\end{align*}
By using the definition of the mild solution, Walsh's isometry, the zero-mean property of the stochastic integral and the previous estimate we get for $(t,x)\in(0,\infty)\times[0,1]$ 
\begin{align*}
\mathbb{E}\left[u(t,x)^2\right]dt
&=\left(\int_0^1p_t^N(x,y)u_0(y)d\mu(y)\right)^2
+\int_0^t\int_0^1 p_{t-s}^N(x,y)^2\mathbb{E}\left[g(u(s,y))^2\right]d\mu(y)ds\\
&~~~+\left(\int_0^1p_t^N(x,y)u_0(y)d\mu(y)\right)^2\mathbb{E}\left[\int_0^t\int_0^1p_{t-s}^N(x,y)g(u(s,y))\xi(s,y)d\mu(y)ds\right]\\
&\geq\inf_{x\in[0,1]}u_0(x)^2+\int_0^t\int_0^1L_g^2 p_{t-s}^N(x,y)^2\mathbb{E}\left[u(s,y)^2\right]d\mu(y)ds.
\end{align*} 
It holds $\varphi_1^N=1$ and $\lambda_1^N=0$ and consequently for $t\in(0,\infty), x\in[0,1]$  \[p_t(x,x)=\sum_{k\geq 1}e^{-\lambda_k^N t}\varphi_k^N(x)^2\geq e^{-\lambda_1^N t}\varphi_1^N(x)^2=1.\]
With that and $I(t)\coloneqq\inf_{x\in[0,1]}\mathbb{E}\left[u(t,x)^2\right]$ we obtain
\begin{align}
I(t)&\geq \inf_{x\in[0,1]}u_0(x)^2+\int_0^t\int_0^1 L_g^2p_{t-s}^N(x,y)^2I(s)d\mu(x)ds\notag\\
&=\inf_{x\in[0,1]}u_0(x)^2+\int_0^t\int_0^1 L_g^2p_{2(t-s)}^N(x,x)I(s)d\mu(x)ds\notag\\
&\geq \inf_{x\in[0,1]}u_0(x)^2+\int_0^tL_g^2I(s)ds.\label{p_lower_estimate}
\end{align}
It follows
\begin{align*}
I(t)&\geq \inf_{x\in[0,1]}u_0(x)^2+\int_0^tL_g^2\inf_{x\in[0,1]}u_0(x)^2ds+
\int_0^tL_g^2\int_0^s L_g^2 I(u)duds\\
&=\inf_{x\in[0,1]}u_0(x)^2+L_g^2\inf_{x\in[0,1]}u_0(x)^2t+
\int_0^tL_g^2\int_0^s L_g^2 I(u)duds
\end{align*}
and by iterating this
\begin{align*}
I(t)&\geq \inf_{x\in[0,1]}u_0(x)^2\sum_{n=0}^{\infty} \frac{L_g^{2n}t^n}{n!}=
\inf_{x\in[0,1]}u_0(x)^2e^{L_g^2t},
\end{align*}
which yields 
\begin{align*}
\liminf_{t\to\infty}\frac{1}{t}\log\mathbb{E}\left[u(t,x)^2\right]\geq\liminf_{t\to\infty}\frac{\log\inf_{x\in[0,1]}u_0(x)^2}{t}+L_g^2=L_g^2.
\end{align*}
\end{proof}
The following is the main result of this section and follows immediately from the definition of weak intermittency and Proposition \ref{intermittency_heat_lower}. 
\begin{kor}
Let $\inf_{x\in[0,1]}u_0(x)>0$. Then, the mild solution of \eqref{spde_heat_intermittency} is weakly intermittent on $[0,1]$.
\end{kor}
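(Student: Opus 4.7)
The plan is to check the two conditions in the definition of weak intermittency separately, drawing respectively on the lower bound from Proposition \ref{intermittency_heat_lower} and the upper bound from Proposition \ref{intermittency_upper_estimate_heat}.

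For the positivity of the lower moment Lyapunov exponent, I would observe that the hypothesis $\inf_{x \in [0,1]} u_0(x) > 0$ together with Assumption \ref{hypo_heat_only_g} puts us exactly in the setting of Proposition \ref{intermittency_heat_lower}, which yields $\gamma(2,x) \geq L_g^2$ for every $x \in [0,1]$. Since $L_g > 0$ by the definition given in Proposition \ref{intermittency_heat_lower}, this directly supplies the first required inequality $\gamma(2,x) > 0$.

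For the finiteness of the upper moment Lyapunov exponent, I would invoke Proposition \ref{intermittency_upper_estimate_heat}. A minor bookkeeping point is that this proposition is stated under Assumption \ref{hypo_heat} for some fixed $q \geq 2$, whereas weak intermittency requires a bound on $\mathbb{E}[u(t,x)^p]$ for every $p \in [2,\infty)$. I would resolve this by noting that under Assumption \ref{hypo_heat_only_g} the initial datum is bounded, the coefficient $g$ satisfies the linear growth bound with $M \equiv L$, and $f \equiv 0$; consequently Assumption \ref{hypo_heat} is verified for every $q \geq 2$. Therefore, for any fixed $p \in [2,\infty)$ one may apply Proposition \ref{intermittency_upper_estimate_heat} with $q := p$, take logarithms on both sides of the resulting bound, divide by $t$ and pass to the $\limsup$ as $t \to \infty$ to conclude $\bar\gamma(p,x) \leq C_{10} \, p^{1 + \frac{1}{1-\gamma\delta}} < \infty$.

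The main (and essentially only) obstacle is the bookkeeping just mentioned: one has to make explicit the otherwise trivial observation that under Assumption \ref{hypo_heat_only_g} the moment parameter $q$ in Assumption \ref{hypo_heat} may be chosen arbitrarily large, so that the upper estimate is available uniformly at every $p \in [2,\infty)$. Once this is noted, both defining inequalities of weak intermittency are in place and the corollary follows.
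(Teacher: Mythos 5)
Your proposal is correct and follows essentially the same route as the paper: the paper deduces the corollary immediately from the definition of weak intermittency, Proposition \ref{intermittency_heat_lower} for $\gamma(2,x)\geq L_g^2>0$, and the bound $\bar\gamma(p)\leq C_{10}p^{1+\frac{1}{1-\gamma\delta}}<\infty$ recorded right after Theorem \ref{intermittency_upper_estimate_heat}. Your explicit remark that Assumption \ref{hypo_heat} holds for every $q\geq 2$ under Assumption \ref{hypo_heat_only_g} (so the upper bound is available for all $p\in[2,\infty)$) is exactly the bookkeeping the paper leaves implicit.
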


\end{document}